\setlist{font=\normalfont}
\newtheorem{theorem}{Theorem}[section]
\newtheorem{proposition}[theorem]{Proposition}
\newtheorem{corollary}[theorem]{Corollary}
\newtheorem{lemma}[theorem]{Lemma}
\theoremstyle{definition}
\newtheorem{definition}[theorem]{Definition}
\newtheorem{example}[theorem]{Example}
\numberwithin{equation}{section}
\newcommand{\abs}[1]{|#1|}
\newcommand{\set}[1]{\{#1\}}
\newcommand{\cset}[2]{\set{{#1}\colon{#2}}}
\renewcommand{\vec}[1]{\mathbf{#1}}
\newcommand{\norm}[1]{\|#1\|}
\newcommand{\gyr}[2]{{\mathrm{gyr}[{#1}]}{#2}}
\renewcommand{\ker}[1]{{\mathrm{ker\,}{#1}}}
\newcommand{\Gyr}[2]{{\mathrm{Gyr}[{#1}]}{#2}}
\newcommand{\igyr}[2]{{\mathrm{gyr^{-1}}[{#1}]}{#2}}
\newcommand{\gen}[1]{\langle#1\rangle}
\newcommand{\Aut}[1]{\mathrm{Aut}\,{(#1)}}
\newcommand{\lbar}{\overline}
\newcommand{\iplus}{\oplus_{\lbar{G}}}
\newcommand{\St}[1]{\mathrm{Stab}\,({#1})}
\newcommand{\sym}[1]{\mathrm{Sym}\,(#1)}
\newcommand{\Id}[1]{\mathrm{Id}\,{(#1)}}
\newcommand{\lcup}[2]{\displaystyle\bigcup_{#1}^{#2}}
\newcommand{\lsum}[2]{\displaystyle\sum_{#1}^{#2}}
\newcommand{\Bp}[1]{\left(#1\right)}
\newcommand{\res}[2]{{#1}\big|_{{#2}}}
\newcommand{\cols}[1]{\mathbf{\mathcal{#1}}}
\newcommand{\R}{\mathbb{R}}
\newcommand{\C}{\mathbb{C}}
\newcommand{\B}{\mathbb{B}}
\newcommand{\D}{\mathbb{D}}
\newcommand{\lamb}{\lambda}
\newcommand{\del}{\delta}
\newcommand{\gam}{\gamma}
\newcommand{\sig}{\sigma}
\newcommand{\vphi}{\varphi}
\begin{document}
\title[Isomorphism Theorems for Gyrogroups and L-Subgyrogroups]{Isomorphism Theorems for Gyrogroups\\ and L-Subgyrogroups}

\author{Teerapong Suksumran}
\address{Department of Mathematics and Computer Science, Faculty of Science,
Chulalongkorn University, Phyathai Road, Patumwan, Bangkok 10330,
Thailand} \email{sk\_teer@yahoo.com \textrm{and
}kwiboonton@gmail.com}

\author{Keng Wiboonton}
\address{}
\email{}

\begin{abstract}
We extend well-known results in group theory to gyrogroups,
\mbox{especially} the isomorphism theorems. We prove that an
arbitrary gyrogroup $G$ induces the gyrogroup structure on the
symmetric group of $G$ so that Cayley's Theorem is obtained.
Introducing the notion of L-subgyrogroups, we show that an
L-subgyrogroup partitions $G$ into left cosets. Consequently, if $H$
is an L-subgyrogroup of a finite gyrogroup $G$, then the order of
$H$ divides the order of $G$.
\end{abstract}
\subjclass[2010]{20N05, 18A32, 20A05, 20B30} \keywords{gyrogroup,
subgyrogroup, L-subgyrogroup, Cayley's Theorem,\\ Lagrange's
Theorem, isomorphism theorem, Bol loop, $\mathrm{A}_\ell$-loop}
\maketitle

\section{Introduction}
\par Let $c$ be a positive constant representing the speed of light in
vacuum and let $\R^3_c$ denote the $c$-ball of relativistically
admissible velocities, $\R^3_c = \cset{\vec{v}\in
\R^3}{\norm{\vec{v}}<c}$. In \cite{AU2007SD}, Einstein velocity
addition $\oplus_E$ in the $c$-ball is given by the equation
$$\vec{u}\oplus_E\vec{v} =
\frac{1}{1+\frac{\gen{\vec{u},\vec{v}}}{c^2}}\left\{\vec{u}+
\frac{1}{\gam_\vec{u}}\vec{v} +
\frac{1}{c^2}\frac{\gam_\vec{u}}{1+\gam_\vec{u}}\gen{\vec{u},\vec{v}}\vec{u}\right\},
$$
where $\gam_\vec{u}$ is the Lorentz factor given by $\gam_{\vec{u}}
= \dfrac{1}{\sqrt{1-\frac{\norm{\vec{u}}^2}{c^2}}}$.

\par The system $(\R^3_c, \oplus_E)$ does not form a group since
$\oplus_E$ is neither associative nor commutative. Nevertheless,
Ungar showed that $(\R^3_c, \oplus_E)$ is rich in structure and
encodes a group-like structure, namely the gyrogroup structure. He
introduced space rotations $\gyr{\vec{u},\vec{v}}{}$, called
\textit{gyroautomorphisms}, to repair the breakdown of associativity
in $(\R_c^3,\oplus_E)$:
\begin{eqnarray}
\vec{u}\oplus_E(\vec{v}\oplus_E \vec{w}) &=& (\vec{u}\oplus_E \vec{v})\oplus_E\gyr{\vec{u},\vec{v}}{\vec{w}}\nonumber\\
(\vec{u}\oplus_E\vec{v})\oplus_E \vec{w} &=&
\vec{u}\oplus_E(\vec{v}\oplus_E\gyr{\vec{v},\vec{u}}{\vec{w}})\nonumber
\end{eqnarray}
for all $\vec{u}, \vec{v}, \vec{w}\in\R^3_c$. The resulting system
forms a gyrocommutative gyrogroup, called the \textit{Einstein
gyrogroup}, which has been intensively studied in \cite{AKAU2004,
AU2007SD, AU2005, AU2008, AU2009MC, NSAU2013, AU2013, MF2014}.

\par There are close connections between the Einstein gyrogroup
and the Lorentz transformations, as described in \cite[Chapter
11]{AU2008} and \cite{AU2005JGSP}. A Lorentz transformation without
rotation is called a \textit{Lorentz boost}. Let $L(\vec{u})$ and
$L(\vec{v})$ denote Lorentz boosts \mbox{parameterized} by $\vec{u}$
and $\vec{v}$ in $\R^3_c$. The composite of two Lorentz boosts is
not a pure Lorentz boost, but a Lorentz boost followed by a space
rotation:
\begin{equation}\label{eqn: the composite of Lorentz boosts}
L(\vec{u})\circ L(\vec{v}) = L(\vec{u}\oplus_E\vec{v})\circ
\Gyr{\vec{u},\vec{v}}{},
\end{equation}
where $\Gyr{\vec{u},\vec{v}}{}$ is a rotation of spacetime
coordinates induced by the Einstein \mbox{gyroautomorphism}
$\gyr{\vec{u}, \vec{v}}{}$. In this paper, we present an abstract
version of the composition law (\ref{eqn: the composite of Lorentz
boosts}) of Lorentz boosts.

\par Another example of a gyrogroup is the \textit{M\"{o}bius
gyrogroup}, which consists of the complex unit disk $\D =
\cset{z\in\C}{\abs{z}<1}$ with M\"{o}bius addition
\begin{equation}\label{eqn: complex Mobius addition}
a\oplus_M b = \frac{a+b}{1+\bar{a}b}
\end{equation}
for $a, b\in\D$. The M\"{o}bius gyroautomorphisms are given by
\begin{equation}\label{eqn: complex Mobius gyration}
\gyr{a, b}{z} = \frac{1+a\bar{b}}{1+\bar{a}b}z,\hskip0.5cm z\in \D.
\end{equation}
\par Let $\B$ denote the open unit ball of $n$-dimensional Euclidean
space $\R^n$ (or more generally of a real inner product space). In
\cite{AU2008MAA}, Ungar extended M\"{o}bius addition from the
complex unit disk to the unit ball:
\begin{equation}\label{Eqn: Mobius addition on V}\vec{u}\oplus_M\vec{v}
= \dfrac{(1 + 2\gen{\vec{u},\vec{v}} + \norm{\vec{v}}^2)\vec{u} + (1
- \norm{\vec{u}}^2)\vec{v}}{1 + 2\gen{\vec{u},\vec{v}} +
\norm{\vec{u}}^2\norm{\vec{v}}^2}
\end{equation}
for $\vec{u},\vec{v}\in\B$. The unit ball together with M\"{o}bius
addition forms a gyrocommutative gyrogroup, which has been
intensively studied in \cite{MFGR2011, MF2009, JL2010, AU2008MAA,
AU2008, AU2009MC, SKJL2013}.

\par The factorization of M\"{o}bius gyrogroups was comprehensively
studied by Ferreira and Ren in \cite{MF2009, MFGR2011}, in which
they showed that any M\"{o}bius subgyrogroup partitions the
M\"{o}bius gyrogroup into left cosets. The fact that any
subgyrogroup of an arbitrary gyrogroup partitions the gyrogroup is
not stated in the literature, and this is indeed the case, as shown
in Theorem \ref{thm: Sim_H is an equivalence relation}. This result
leads to the introduction of \textit{L-subgyrogroups}. We prove that
an L-subgyrogroup partitions the gyrogroup into left cosets and
consequently obtain a portion of \textit{Lagrange's Theorem}: if $H$
is an L-subgyrogroup of a finite gyrogroup $G$, then the order of
$H$ divides the order of $G$. We also prove the isomorphism theorems
for gyrogroups, in full analogy with their group counterparts.

\section{Basic Properties of Gyrogroups}\label{Sec: Preliminaries}
\par A pair $(G,\oplus)$ consisting of a nonempty set $G$ and a binary
operation $\oplus$ on $G$ is called a \textit{magma}. Let $(G,
\oplus)$ be a magma. A bijection from $G$ to itself is called an
\textit{automorphism} of $G$ if $\vphi(a\oplus b) =
\vphi(a)\oplus\vphi(b)$ for all $a, b\in G$. The set of all
automorphisms of $G$ is denoted by $\Aut{G, \oplus}$. Ungar
formulated the formal definition of a gyrogroup as follows.
\begin{definition}[{\hskip-0.5pt}\cite{AU2008}]\label{Def: Main definition of gyrogroup}
A magma $(G,\oplus)$ is a \textit{gyrogroup} if its binary operation
satisfies the following axioms:
\begin{enumerate}
    \item [(G1)] $\exists0\in G\forall a\in G$, $0\oplus a = a$
    \item [(G2)] $\forall a\in G\exists b\in G$, $b\oplus a = 0$
    \item [(G3)] $\forall a, b\in G\exists\gyr{a,
    b}{}\in\Aut{G,\oplus}\forall c\in G$,
    $$a\oplus (b\oplus c) = (a\oplus b)\oplus\gyr{a, b}{c}$$
    \item [(G4)] $\forall a, b\in G$, $\gyr{a, b}{} = \gyr{a\oplus b,
    b}{}$.
\end{enumerate}
\end{definition}

\newpage

\par The axioms in Definition \ref{Def: Main definition of gyrogroup}
imply the right counterparts.
\begin{theorem}[{\hskip-0.5pt}\cite{AU2008}]
\label{Thm: Alternative def. of gyrogroup} A magma $(G,\oplus)$
forms a gyrogroup if and only if it satisfies the following
properties:
\begin{enumerate}
\item [(g1)] $\exists 0\in G\forall a\in G, 0\oplus a = a$ and $a\oplus 0 =
a$ \hfill\normalfont{(two-sided identity)}
\item [(g2)] $\forall a\in G\exists b\in G, b\oplus a = 0$ \textit{and} $a\oplus b = 0$\hfill\normalfont{(two-sided inverse)}\\
 \textit{For} $a, b, c\in G$, \textit{define}
\begin{equation}\tag{gyrator identity}
 \gyr{a, b}{c} = \ominus(a\oplus
b)\oplus(a\oplus (b\oplus c)),\end{equation} \textit{then}
\item [(g3)] $\gyr{a,
b}{}\in\Aut{G,\oplus}$\hfill\normalfont{(gyroautomorphism)}
\item [(g3a)] $a\oplus (b\oplus c) = (a\oplus b)\oplus\gyr{a, b}{c}$\hfill\normalfont{(left gyroassociative
law)}
\item [(g3b)] $(a\oplus b)\oplus c = a\oplus (b\oplus\gyr{b, a}{c})$\hfill\normalfont{(right gyroassociative
law)}
\item [(g4a)] $\gyr{a, b}{} = \gyr{a\oplus b, b}{}$\hfill\normalfont{(left loop
property)}
\item [(g4b)] $\gyr{a, b}{} = \gyr{a, b\oplus a}{}$.\hfill\normalfont{(right loop
property)}
\end{enumerate}
\end{theorem}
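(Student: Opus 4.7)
The backward direction is immediate, since (g1)--(g4a) literally contain (G1)--(G4). The forward direction is the substantive one; my plan is to bootstrap the one-sided identity and inverse of (G1)--(G2) into two-sided versions and then read off the remaining items in a short chain of consequences of (G3) and (G4).

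The first step is to establish \emph{left cancellation}, $a \oplus x = a \oplus y \Rightarrow x = y$. Given $a \in G$ with left inverse $a'$ from (G2), axiom (G3) gives
\[a' \oplus (a \oplus x) = (a' \oplus a) \oplus \gyr{a', a}{x} = \gyr{a', a}{x},\]
so injectivity of the gyroautomorphism $\gyr{a', a}{}$ forces the conclusion. Applying left cancellation to the identity $0 \oplus (a \oplus c) = a \oplus c = (0 \oplus a) \oplus \gyr{0, a}{c}$ then yields $\gyr{0, a}{} = \mathrm{id}$ for every $a$. Next, every automorphism $\phi$ fixes $0$: from $\phi(0) \oplus \phi(0) = \phi(0)$ I pick a left inverse $c$ of $\phi(0)$, rewrite $c \oplus (\phi(0) \oplus \phi(0)) = \gyr{c, \phi(0)}{\phi(0)}$ via (G3), and collapse $\gyr{c, \phi(0)}{} = \gyr{c \oplus \phi(0), \phi(0)}{} = \gyr{0, \phi(0)}{} = \mathrm{id}$ using the left loop property (G4), yielding $\phi(0) = 0$.

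With these tools in place the two-sided inverse and identity fall out together. Given $b \oplus a = 0$, (G4) gives $\gyr{b, a}{} = \gyr{b \oplus a, a}{} = \gyr{0, a}{} = \mathrm{id}$, whence (G3) produces $b \oplus (a \oplus b) = (b \oplus a) \oplus b = b$. Applying a left inverse $b'$ of $b$ on the left and invoking (G3) once more reduces this to $\gyr{b', b}{a \oplus b} = 0$, and then the automorphism-fixes-zero fact forces $a \oplus b = 0$. Feeding this back into $b = b \oplus (a \oplus b) = b \oplus 0$ delivers the right identity. The gyrator identity is then a direct consequence of applying $\ominus(a \oplus b)$ on the left of (G3) and using left cancellation, while (g3) and (g3a) are just (G3) and (g4a) is (G4).

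It remains to establish the right gyroassociative law (g3b) and the right loop property (g4b). The former reduces to the inversion identity $\gyr{a, b}{} \circ \gyr{b, a}{} = \mathrm{id}$: once this is in hand, (g3a) rewrites $a \oplus (b \oplus \gyr{b, a}{c}) = (a \oplus b) \oplus \gyr{a, b}{\gyr{b, a}{c}} = (a \oplus b) \oplus c$, which is (g3b). I expect this inversion identity together with (g4b) to be the main obstacle, as they demand a subtler interplay of (G3) and (G4) than the bootstrapping above; the plausible route is to use (G4) to rewrite $\gyr{b, a}{}$ with $a \oplus b$ as a common argument, apply the gyrator identity together with the two-sided inverse to collapse the composition with $\gyr{a, b}{}$, and then derive (g4b) by writing $\gyr{a, b \oplus a}{}$ in two ways via (G3)--(G4) and comparing.
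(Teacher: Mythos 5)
First, a point of comparison: the paper does not prove this theorem at all --- it is quoted from Ungar's book \cite{AU2008} as background --- so your attempt must be judged on its own merits rather than against an in-paper argument. The parts you actually carry out are correct and follow the standard development: left cancellation from (G2)--(G3), $\gyr{0,a}{}=\mathrm{id}$ from (G1) plus cancellation, the fact that every automorphism of $(G,\oplus)$ fixes $0$ (via idempotency of $\phi(0)$ and the left loop property), and from these the two-sided inverse, the right identity (noting that every element occurs as a left inverse once two-sidedness of inverses is known), and the gyrator identity. The backward direction is indeed trivial.

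The genuine gap is that the proof stops exactly where the theorem becomes hard: (g3b) and (g4b) are never established. You correctly reduce (g3b) to the inversion law $\gyr{a,b}{}\circ\gyr{b,a}{}=\mathrm{id}$, but then offer only a ``plausible route'' rather than an argument, and (g4b) is in fact equivalent to that same unproven law given what you already have: writing $\gyr{a,b}{}=L_{\ominus(a\oplus b)}\circ L_a\circ L_b$ (your gyrator identity) one computes $\gyr{\ominus a,a\oplus b}{}\circ\gyr{a,b}{}=\mathrm{id}$, and the left loop property turns $\gyr{\ominus a,a\oplus b}{}$ into $\gyr{b,a\oplus b}{}$, so $\gyr{a,b}{}^{-1}=\gyr{b,a\oplus b}{}$; the right loop property $\gyr{b,a}{}=\gyr{b,a\oplus b}{}$ is therefore precisely the statement $\gyr{a,b}{}^{-1}=\gyr{b,a}{}$. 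Moreover, the route you sketch does not work as described: the left loop property (G4) rewrites $\gyr{b,a}{}$ as $\gyr{b\oplus a,a}{}$, so the element it produces is $b\oplus a$, not $a\oplus b$, and without gyrocommutativity these differ, leaving no ``common argument'' to collapse against $\gyr{a,b}{}=\gyr{a\oplus b,b}{}$. Proving inversive symmetry from (G1)--(G4) alone (Theorem \ref{Thm: Gyroautomorphism are even and inversive symmetric}, i.e.\ Theorem 2.27 of \cite{AU2005}) requires a further chain of lemmas, the gyrosum inversion law $\ominus(a\oplus b)=\gyr{a,b}{(\ominus b\ominus a)}$ among them, which your outline does not supply. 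Until that identity is actually derived, items (g3b) and (g4b) remain open and the forward direction is incomplete.
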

\par The map $\gyr{a, b}{}$ is called the \textit{gyroautomorphism
generated by $a$ and $b$}. By Theorem \ref{Thm: Alternative def. of
gyrogroup}, any gyroautomorphism is completely determined by its
generators via the \textit{gyrator identity}. A gyrogroup $G$ having
the additional property that
\begin{equation}\tag{gyrocommutative law}
a\oplus b = \gyr{a, b}({b\oplus a})
\end{equation} for all $a, b\in G$ is called a \textit{gyrocommutative
gyrogroup}.

\par Many of group theoretic theorems are generalized to the
gyrogroup case with the aid of gyroautomorphisms, see \cite{AU2005,
AU2008} for more details. Some theorems are listed here for easy
reference. To shorten notation, we write $a\ominus b$ instead of
$a\oplus (\ominus b)$.

\begin{theorem}[Theorem 2.11, \cite{AU2005}]\label{Thm: Similar group property -a+b + -b+c = -a + c}
Let $G$ be a gyrogroup. Then
\begin{equation}
(\ominus a \oplus b) \oplus \gyr{\ominus a, b}{(\ominus b\oplus c)}
= \ominus a \oplus c
\end{equation}
for all $a, b, c\in G$.
\end{theorem}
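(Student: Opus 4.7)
The plan is to read the target expression as what the left gyroassociative law produces when applied to a suitably chosen associated triple. Specifically, I would apply (g3a) from Theorem \ref{Thm: Alternative def. of gyrogroup} with entries $\ominus a$, $b$, and $\ominus b \oplus c$ to obtain
$$\ominus a \oplus \bigl(b \oplus (\ominus b \oplus c)\bigr) = (\ominus a \oplus b) \oplus \gyr{\ominus a, b}{(\ominus b \oplus c)}.$$
The right-hand side is precisely the left-hand side of the identity to be proved, so the whole theorem reduces to showing that the left-hand side above equals $\ominus a \oplus c$.

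For this reduction I only need the left cancellation identity $b \oplus (\ominus b \oplus c) = c$. This is a standard consequence of the gyrogroup axioms and can be derived in two steps: first, by (g3a) again,
$$b \oplus (\ominus b \oplus c) = (b \ominus b) \oplus \gyr{b, \ominus b}{c} = 0 \oplus \gyr{b, \ominus b}{c} = \gyr{b, \ominus b}{c};$$
second, the left loop property (g4a) gives $\gyr{b, \ominus b}{} = \gyr{b \ominus b, \ominus b}{} = \gyr{0, \ominus b}{}$, and one checks from the gyrator identity together with (g1) that $\gyr{0, x}{}$ is the identity automorphism for every $x \in G$. Substituting this simplification back yields $\ominus a \oplus c$ on the left, matching the right-hand side produced by (g3a).

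The main obstacle, such as it is, is administrative rather than conceptual: one must be careful to have the preliminary identity $\gyr{b, \ominus b}{} = \mathrm{id}_G$ available before attempting the one-line application of left gyroassociativity. Since these basic manipulations are recorded in \cite{AU2005, AU2008}, it is natural to cite them and present the proof as essentially a single invocation of (g3a) followed by left cancellation.
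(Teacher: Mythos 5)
Your main argument is correct, and in fact the paper offers no proof of this statement to compare against: it is quoted verbatim as Theorem 2.11 of \cite{AU2005}. The core of your proof is exactly the standard one: apply the left gyroassociative law (g3a) to the triple $\ominus a$, $b$, $\ominus b\oplus c$ and then invoke the left cancellation law $b\oplus(\ominus b\oplus c)=c$, which the paper itself records as Theorem \ref{thm: cancellation law in gyrogroup}(2). The one blemish is in your optional re-derivation of that cancellation law: you claim that $\gyr{0,x}{}=\mathrm{id}_G$ ``from the gyrator identity together with (g1)'', but the gyrator identity only gives $\gyr{0,x}{c}=\ominus(0\oplus x)\oplus(0\oplus(x\oplus c))=\ominus x\oplus(x\oplus c)$, and concluding that this equals $c$ is precisely the left cancellation statement you are in the middle of proving --- so that particular justification is circular. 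The non-circular route is to put $a=0$ in (g3a) to get $x\oplus c=x\oplus\gyr{0,x}{c}$ and then apply the general left cancellation law (Theorem \ref{thm: cancellation law in gyrogroup}(1), which follows from gyrations being bijections together with the existence of left inverses). Better yet, simply cite Theorem \ref{thm: cancellation law in gyrogroup}(2) and let the proof be the single invocation of (g3a) you describe.
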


\begin{theorem}[Theorem 2.25, \cite{AU2005}]\label{Thm: Gyrosum inversion law}
For any two elements $a$ and $b$ of a gyrogroup,
\begin{equation}
\ominus (a\oplus b) = \gyr{a, b}{(\ominus b\ominus a)}.
\end{equation}
\end{theorem}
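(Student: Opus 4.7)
The plan is to exhibit $\gyr{a,b}{(\ominus b\ominus a)}$ as a right inverse of $a\oplus b$ and then invoke uniqueness of inverses in a gyrogroup. The whole argument should fit on one page since Theorem \ref{Thm: Similar group property -a+b + -b+c = -a + c} already does most of the heavy lifting.

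First I would record the preliminary facts that I propose to quote from \cite{AU2005,AU2008} without reproving: that inverses in a gyrogroup are two-sided and unique, and that the inversion map is involutive, i.e., $\ominus(\ominus a)=a$ for every $a\in G$. Both facts follow from Theorem \ref{Thm: Alternative def. of gyrogroup} by the usual cancellation arguments (the automorphism property of $\gyr{\ominus a,a}{}$ applied to the equality $\ominus a\oplus(a\oplus b)=\ominus a\oplus(a\oplus c)$ produces left cancellation, and uniqueness of the two-sided inverse follows immediately).

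The main step is to apply Theorem \ref{Thm: Similar group property -a+b + -b+c = -a + c} with $a$ replaced by $\ominus a$ and invoke involutivity to rewrite it as
\begin{equation*}
(a\oplus b)\oplus\gyr{a,b}{(\ominus b\oplus c)}=a\oplus c
\end{equation*}
for all $a,b,c\in G$. Specializing to $c=\ominus a$ collapses the right-hand side to $a\ominus a=0$, yielding
\begin{equation*}
(a\oplus b)\oplus\gyr{a,b}{(\ominus b\ominus a)}=0.
\end{equation*}
Thus $\gyr{a,b}{(\ominus b\ominus a)}$ is a right inverse of $a\oplus b$. By uniqueness of inverses it must coincide with $\ominus(a\oplus b)$, which is exactly the claim.

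I do not expect a serious obstacle: the only delicate point is that the substitution $a\mapsto\ominus a$ in Theorem \ref{Thm: Similar group property -a+b + -b+c = -a + c} requires $\ominus(\ominus a)=a$, and the final line requires uniqueness of the inverse, but both are standard preliminary results in the theory and are implicit in the framework set up by Definition \ref{Def: Main definition of gyrogroup} and Theorem \ref{Thm: Alternative def. of gyrogroup}. An alternative proof that avoids involutivity is to use the left gyroassociative law (g3a) directly, reducing $(a\oplus b)\oplus\gyr{a,b}{(\ominus b\ominus a)}$ to $a\oplus(b\oplus(\ominus b\ominus a))$ and then simplifying the inner expression via another application of (g3a); this route, however, forces one to establish separately that $\gyr{b,\ominus b}{}$ is the identity automorphism, so it is strictly more work than the approach above.
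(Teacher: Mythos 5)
The paper itself gives no proof of this statement --- it is quoted verbatim as Theorem 2.25 of \cite{AU2005} --- so there is nothing internal to compare against; your argument therefore has to stand on its own, and it does. The specialization of Theorem \ref{Thm: Similar group property -a+b + -b+c = -a + c} (replace $a$ by $\ominus a$, set $c=\ominus a$) correctly yields $(a\oplus b)\oplus\gyr{a,b}{(\ominus b\ominus a)}=0$, and the auxiliary facts you invoke --- $\ominus(\ominus a)=a$, $a\ominus a=0$, and uniqueness of the (two-sided) inverse --- all follow from Theorem \ref{Thm: Alternative def. of gyrogroup} together with the uniqueness of solutions in Theorem \ref{Thm: Linear equations in gyrogroup}, exactly as you indicate; there is also no circularity, since Theorem 2.11 is itself an immediate consequence of left gyroassociativity and left cancellation. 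For comparison, the usual textbook derivation goes through the gyrator identity directly: $\gyr{a,b}{(\ominus b\ominus a)}=\ominus(a\oplus b)\oplus\bigl(a\oplus(b\oplus(\ominus b\ominus a))\bigr)=\ominus(a\oplus b)\oplus(a\ominus a)=\ominus(a\oplus b)$, which is marginally shorter but uses the same ingredients. One small quibble: your closing remark that the alternative route ``forces one to establish that $\gyr{b,\ominus b}{}$ is the identity'' is not quite right --- one can instead get $b\oplus(\ominus b\oplus x)=x$ from the left cancellation law plus involutivity, which you are already assuming --- but this is a side comment and does not affect the main proof.
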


\begin{theorem}[Theorem 2.27, \cite{AU2005}]\label{Thm: Gyroautomorphism are even and inversive symmetric}
The gyroautomorphisms of any gyrogroup $G$ are even,
\begin{equation}
\gyr{\ominus a, \ominus b}{} = \gyr{a, b}{}
\end{equation}
and inversive symmetric,
\begin{equation}
\igyr{a, b}{} = \gyr{b, a}{}
\end{equation}
for all $a, b\in G$.
\end{theorem}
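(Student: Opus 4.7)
The theorem asserts two facts: evenness, $\gyr{\ominus a, \ominus b}{} = \gyr{a, b}{}$, and the inversive symmetric property, $\igyr{a, b}{} = \gyr{b, a}{}$. The plan is to establish the inversive symmetric property first, then combine it with the gyrosum inversion law (Theorem~\ref{Thm: Gyrosum inversion law}) to derive the evenness.

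For the inversive symmetric property, the key idea is to chain the two gyroassociative laws (g3a) and (g3b) from Theorem~\ref{Thm: Alternative def. of gyrogroup}. By the right gyroassociative law, $(a\oplus b)\oplus c = a\oplus(b\oplus \gyr{b,a}{c})$; applying the left gyroassociative law to the right side, with $\gyr{b,a}{c}$ in the role of the third argument, rewrites it as $(a\oplus b)\oplus \gyr{a,b}{\gyr{b,a}{c}}$. Equating the two and cancelling $a\oplus b$ on the left yields $\gyr{a,b}{\gyr{b,a}{c}} = c$ for every $c\in G$. Interchanging $a$ and $b$ gives the reverse composition, and the inversive symmetric property follows.

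For evenness, the plan is to expand $\gyr{\ominus a, \ominus b}{c}$ via the gyrator identity and reduce it to $\gyr{a, b}{c}$. The main lemma along the way is the identity $\ominus(\ominus a \oplus \ominus b) = \gyr{a,b}{(b\oplus a)}$, which I would obtain by first applying gyrosum inversion to the pair $(b, a)$ to get $\ominus(b\oplus a) = \gyr{b,a}{(\ominus a \ominus b)}$, then inverting the resulting gyration using the just-established inversive symmetric property, and finally using the automorphism property of $\gyr{a,b}{}$ (which commutes with $\ominus$) to handle the additive inverses. Substituting this identity into the gyrator expansion of $\gyr{\ominus a, \ominus b}{c}$ provides the bridge to $\gyr{a,b}{c}$.

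The main obstacle I anticipate is the final matching step. The direct chain of substitutions from gyrosum inversion only exhibits agreement of $\gyr{\ominus a, \ominus b}{}$ and $\gyr{a,b}{}$ at the single element $b \oplus a$, and promoting this pointwise coincidence to equality of automorphisms on all of $G$ is delicate. To close this gap, the plan is to combine the identity above with the loop properties (g4a) and (g4b), the automorphism property of $\gyr{a,b}{}$, and the left cancellation $\ominus x \oplus (x \oplus y) = y$, iterating gyrosum inversion until the gyrator expansions of the two sides match term by term for arbitrary $c$.
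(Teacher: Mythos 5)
The paper does not prove this statement: it is imported verbatim as Theorem 2.27 of \cite{AU2005}, so there is no in-paper proof to compare against. Judged on its own terms, the first half of your proposal is correct and is the standard argument: chaining (g3a) with (g3b) and invoking the general left cancellation law gives $\gyr{a,b}{(\gyr{b,a}{c})}=c$ for all $c$, and interchanging $a$ and $b$ yields $\igyr{a,b}{}=\gyr{b,a}{}$. Your key lemma for evenness, $\ominus(\ominus a\ominus b)=\gyr{a,b}{(b\oplus a)}$, is also correctly derived from gyrosum inversion applied to the pair $(b,a)$ together with inversive symmetry and the fact that automorphisms commute with $\ominus$.

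The gap is exactly where you flag it, and the repair you propose (``iterating gyrosum inversion until the gyrator expansions match term by term,'' together with the loop properties) is not the mechanism that closes it: applying gyrosum inversion to the pair $(\ominus a,\ominus b)$ reintroduces the very gyration $\gyr{\ominus a,\ominus b}{}$ you are trying to identify, so the iteration does not terminate. What closes the gap is a change of variables plus left gyroassociativity. By the gyrator identity and your lemma,
$$\gyr{\ominus a,\ominus b}{c}=\ominus(\ominus a\ominus b)\oplus\bigl(\ominus a\oplus(\ominus b\oplus c)\bigr)=\gyr{a,b}{(b\oplus a)}\oplus w,\qquad w:=\ominus a\oplus(\ominus b\oplus c).$$
Two applications of the left cancellation law give $c=b\oplus(a\oplus w)$, and as $c$ runs over $G$ so does $w$. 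Hence
$$\gyr{a,b}{c}=\gyr{a,b}{\bigl((b\oplus a)\oplus\gyr{b,a}{w}\bigr)}=\gyr{a,b}{(b\oplus a)}\oplus\gyr{a,b}{(\gyr{b,a}{w})}=\gyr{a,b}{(b\oplus a)}\oplus w,$$
using (g3a), the automorphism property of $\gyr{a,b}{}$, and the relation $\gyr{a,b}{}\circ\gyr{b,a}{}=\Id{G}$ established in your first half. The two displays agree for every $c$, which is precisely the asserted equality of automorphisms $\gyr{\ominus a,\ominus b}{}=\gyr{a,b}{}$. So your outline is salvageable and close to a correct proof, but as written the decisive step is missing, and the tools you name for it would not supply it.
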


\par Using Theorem \ref{Thm: Gyroautomorphism are even and inversive
symmetric}, one can prove the following proposition.
\begin{proposition}\label{prop: gyr(X) subset X all a, b implies equality}
Let $G$ be a gyrogroup and let $X\subseteq G$. Then the following
are equivalent:
\begin{enumerate}
    \item\label{item: invarint set under gyromap, inclusion} {$\gyr{a, b}{(X)}\subseteq X$ for all $a, b\in G$;}
    \item\label{item: invarint set under gyromap, equality} {$\gyr{a, b}{(X)} = X$ for all $a, b\in G$.}
\end{enumerate}
\end{proposition}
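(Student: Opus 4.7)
The implication (2) $\Rightarrow$ (1) is immediate, so the plan is to show (1) $\Rightarrow$ (2). Assume $\gyr{a,b}{(X)} \subseteq X$ for all $a, b \in G$. The remaining content is the reverse inclusion $X \subseteq \gyr{a,b}{(X)}$ for every choice of $a, b$.

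The key observation is the inversive symmetry from Theorem \ref{Thm: Gyroautomorphism are even and inversive symmetric}, namely $\igyr{a,b}{} = \gyr{b,a}{}$. Given $x \in X$ and any $a, b \in G$, I would set $y = \gyr{b,a}{x}$. By hypothesis (1) applied to the pair $(b,a)$ in place of $(a,b)$, we have $y \in \gyr{b,a}{(X)} \subseteq X$. Then $\gyr{a,b}{y} = \gyr{a,b}{\gyr{b,a}{x}} = x$ because $\gyr{a,b}{}$ and $\gyr{b,a}{}$ are mutually inverse bijections. Hence $x \in \gyr{a,b}{(X)}$, which gives $X \subseteq \gyr{a,b}{(X)}$ and combines with the hypothesis to yield equality.

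There is essentially no obstacle here; the whole argument rides on the fact that the hypothesis is stated uniformly over all pairs $(a,b) \in G \times G$, which lets us swap $a$ and $b$ and invoke inversive symmetry to produce preimages inside $X$. The only thing to be slightly careful about is to invoke Theorem \ref{Thm: Gyroautomorphism are even and inversive symmetric} explicitly so the reader sees why $\gyr{b,a}{}$ is the inverse of $\gyr{a,b}{}$; after that, the two displayed equalities $\gyr{b,a}{x} \in X$ and $\gyr{a,b}{\gyr{b,a}{x}} = x$ complete the proof in a few lines.
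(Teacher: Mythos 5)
Your argument is correct and is exactly the route the paper intends: the paper gives no written proof but prefaces the proposition with ``Using Theorem \ref{Thm: Gyroautomorphism are even and inversive symmetric}, one can prove the following,'' and your use of inversive symmetry $\igyr{a,b}{}=\gyr{b,a}{}$ together with the hypothesis applied to the swapped pair $(b,a)$ is precisely that argument. No gaps.
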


\par The \textit{gyrogroup cooperation} $\boxplus$ is defined by the
equation
\begin{equation}\label{Eqn: Cooperation}
a\boxplus b = a\oplus\gyr{a, \ominus b}{b}, \hskip0.5cm a, b\in G.
\end{equation}
Like groups, every linear equation in a gyrogroup $G$ has a unique
solution in $G$.
\begin{theorem}[Theorem 2.15, \cite{AU2005}]\label{Thm: Linear equations in gyrogroup}
Let $G$ be a gyrogroup and let $a, b\in G$. The unique solution of
the equation $a\oplus x = b$ in $G$ for the unknown $x$ is $x =
\ominus a\oplus b$, and the unique solution of the equation $x\oplus
a = b$ in $G$ for the unknown $x$ is $x = b\boxplus (\ominus a)$.
\end{theorem}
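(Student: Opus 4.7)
My plan is to verify the two claimed solutions directly. The left equation is immediate from the left gyroassociative law, while the right equation requires the cooperation $\boxplus$ together with a cancellation identity.

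\emph{The left equation $a\oplus x=b$.} Substituting $x=\ominus a\oplus b$ and applying (g3a),
\[
a\oplus(\ominus a\oplus b)=(a\ominus a)\oplus\gyr{a,\ominus a}{b}=\gyr{a,\ominus a}{b};
\]
the left loop property (g4a) collapses $\gyr{a,\ominus a}{}$ to $\gyr{0,\ominus a}{}$, which is the identity automorphism (a routine consequence of the gyrator identity and the two-sided identity asserted in Theorem \ref{Thm: Alternative def. of gyrogroup}), so the right-hand side equals $b$. Uniqueness follows by left-adding $\ominus a$ to both sides of $a\oplus x=b$ and repeating the same cancellation.

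\emph{The right equation $x\oplus a=b$.} My strategy is to first prove the cancellation identity
\[
(y\oplus a)\boxplus(\ominus a)=y\qquad\text{for all }y\in G,
\]
which unfolds as
\[
(y\oplus a)\boxplus(\ominus a)=(y\oplus a)\oplus\gyr{y\oplus a,a}{\ominus a}=(y\oplus a)\oplus\gyr{y,a}{\ominus a}=y\oplus(a\ominus a)=y
\]
by the definition of $\boxplus$, the left loop property (g4a), and (g3a) applied in reverse (together with the fact that gyroautomorphisms fix $0$). This yields uniqueness at once: any solution $x$ of $x\oplus a=b$ must satisfy $x=(x\oplus a)\boxplus(\ominus a)=b\boxplus(\ominus a)$.

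The remaining step, which I expect to be the main obstacle, is the dual identity $(b\boxplus(\ominus a))\oplus a=b$ certifying existence. Writing $b\boxplus(\ominus a)=b\oplus\gyr{b,a}{\ominus a}$ and applying the right gyroassociative law (g3b) reduces the task to the gyration identity $\gyr{\gyr{b,a}{\ominus a},b}{a}=\gyr{b,a}{a}$. My intended route is to derive this by combining the inversive symmetry $\igyr{a,b}{}=\gyr{b,a}{}$ of Theorem \ref{Thm: Gyroautomorphism are even and inversive symmetric} with Theorem \ref{Thm: Similar group property -a+b + -b+c = -a + c} applied to the pair $(b,\gyr{b,a}{\ominus a})$; the resulting gyration cancellation, fed back through (g3a), should collapse $(b\boxplus(\ominus a))\oplus a$ to $b\oplus(\gyr{b,a}{\ominus a}\oplus\gyr{b,a}{a})=b\oplus\gyr{b,a}{0}=b$ and close the proof.
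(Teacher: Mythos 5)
Note first that the paper offers no proof of this theorem at all --- it is imported verbatim as Theorem 2.15 of \cite{AU2005} --- so your attempt can only be judged on its own terms. Your treatment of the left equation and of the \emph{uniqueness} half of the right equation is correct and complete: $a\oplus(\ominus a\oplus b)=\gyr{a,\ominus a}{b}=\gyr{0,\ominus a}{b}=b$, and the cancellation $(y\oplus a)\boxplus(\ominus a)=y$ via the left loop property and (g3a) are exactly the standard arguments. (One small caution: justifying $\gyr{0,\ominus a}{}=\Id{G}$ ``by the gyrator identity'' implicitly invokes the left cancellation law, which the paper derives \emph{from} this very theorem; derive it instead from the general left cancellation law, which follows directly from (G2), (G3) and the injectivity of gyroautomorphisms.)

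The genuine gap is the existence half of the right equation, which you yourself flag as unfinished. Everything you write there reduces to the single identity $\gyr{d,b}{a}=\ominus d$, where $d=\gyr{b,a}{(\ominus a)}$, and the tools you propose do not prove it: applying Theorem \ref{Thm: Similar group property -a+b + -b+c = -a + c} to the pair $(b,d)$ with $c=0$ yields $(b\oplus d)\oplus\gyr{b,d}{(\ominus d)}=b$, and inversive symmetry converts the resulting requirement $\gyr{b,d}{(\ominus d)}=a$ straight back into $\gyr{d,b}{a}=\ominus d$ --- you travel in a circle. The missing idea is to re-express $d$ through the gyrator identity itself: $d=\gyr{b,a}{(\ominus a)}=\ominus(b\oplus a)\oplus\bigl(b\oplus(a\ominus a)\bigr)=\ominus(b\oplus a)\oplus b$. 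The left loop property (g4a) then gives $\gyr{d,b}{}=\gyr{\ominus(b\oplus a)\oplus b,\,b}{}=\gyr{\ominus(b\oplus a),b}{}$, and a second application of the gyrator identity telescopes:
\begin{equation*}
\gyr{\ominus(b\oplus a),b}{a}=\ominus\bigl(\ominus(b\oplus a)\oplus b\bigr)\oplus\bigl(\ominus(b\oplus a)\oplus(b\oplus a)\bigr)=\ominus d\oplus 0=\ominus d.
\end{equation*}
Feeding this into (g3b) as you intended, $(b\boxplus(\ominus a))\oplus a=(b\oplus d)\oplus a=b\oplus(d\oplus\gyr{d,b}{a})=b\oplus(d\ominus d)=b$, which closes the proof.
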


\par The following cancellation laws in gyrogroups are derived as a consequence of
Theorem \ref{Thm: Linear equations in gyrogroup}.
\begin{theorem}[{\hskip-0.5pt}\cite{AU2005}]\label{thm: cancellation law in
gyrogroup} Let $G$ be a gyrogroup. For all $a, b, c\in G$,
\begin{enumerate}
    \item $a\oplus b = a\oplus c$ implies $b = c$\hfill\rm{(general left cancellation
    law)}
    \item $\ominus a\oplus(a\oplus b) = b$\hfill\rm{(left cancellation law)}
    \item $(b\ominus a)\boxplus a = b$\hfill\rm{(right cancellation law I)}
    \item $(b\boxplus (\ominus a))\oplus a = b$. \hfill\rm{(right cancellation law II)}
\end{enumerate}
\end{theorem}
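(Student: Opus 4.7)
The plan is to derive all four statements from Theorem \ref{Thm: Linear equations in gyrogroup}, together with a short appeal to the left loop property and the left gyroassociative law for item (3). Items (1), (2), and (4) are essentially restatements of the uniqueness and existence content of Theorem \ref{Thm: Linear equations in gyrogroup}; item (3) is more delicate because it mixes the cooperation $\boxplus$ with the operation $\oplus$ in a way not directly covered by that theorem.

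I would begin with (2). By Theorem \ref{Thm: Linear equations in gyrogroup}, the unique solution of $a\oplus x = a\oplus b$ in $G$ is $x = \ominus a\oplus(a\oplus b)$; since $x = b$ is visibly a solution, uniqueness forces $\ominus a\oplus(a\oplus b) = b$. Item (1) then follows immediately: if $a\oplus b = a\oplus c$, then $b$ and $c$ are both solutions of $a\oplus x = a\oplus b$, so $b = c$ by uniqueness. Equivalently, one may add $\ominus a$ on the left to both sides and invoke (2).

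Item (4) is immediate from Theorem \ref{Thm: Linear equations in gyrogroup}, which asserts that $x = b\boxplus(\ominus a)$ is \emph{the} solution of $x\oplus a = b$; substituting this $x$ yields $(b\boxplus(\ominus a))\oplus a = b$.

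The main step is (3). Unwinding the definition (\ref{Eqn: Cooperation}) of the cooperation gives $(b\ominus a)\boxplus a = (b\ominus a)\oplus \gyr{b\ominus a,\ominus a}{a}$. Reading $b\ominus a$ as $b\oplus(\ominus a)$ and applying the left loop property (g4a) in the form $\gyr{b,\ominus a}{} = \gyr{b\oplus(\ominus a),\ominus a}{}$ yields $\gyr{b\ominus a,\ominus a}{} = \gyr{b,\ominus a}{}$. Substituting this and then invoking the left gyroassociative law (g3a) in reverse collapses the expression to $b\oplus(\ominus a\oplus a) = b\oplus 0 = b$. The main obstacle is precisely that Theorem \ref{Thm: Linear equations in gyrogroup} does not govern equations with $\boxplus$ on the left-hand side; the left loop property is the bridge, converting the gyration tied to $b\ominus a$ into one tied to the simpler $b$ so that left gyroassociativity can absorb it.
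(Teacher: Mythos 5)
Your proposal is correct, and it follows the same route the paper intends: the paper gives no proof of its own (the result is quoted from \cite{AU2005} with only the remark that it is ``derived as a consequence of Theorem \ref{Thm: Linear equations in gyrogroup}''), and your derivation of (1), (2), (4) from the uniqueness of solutions is exactly that consequence. Your extra care with (3) --- using the left loop property to replace $\gyr{b\ominus a,\ominus a}{}$ by $\gyr{b,\ominus a}{}$ and then collapsing via left gyroassociativity --- is the standard argument and correctly fills the one step that the linear-equation theorem alone does not cover.
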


\par It is known in the literature that every gyrogroup forms a left Bol
loop with the $\mathrm{A}_\ell$-property, where the
gyroautomorphisms correspond to \textit{left inner mappings} or
\textit{precession maps}. In fact, gyrogroups and left Bol loops
with the $\mathrm{A}_\ell$-property are equivalent, see for instance
\cite{LS1998AM}.

\par To prove an analog of Cayley's theorem for gyrogroups, we will
make use of the following theorem:
\begin{theorem}[Theorem 1, \cite{MF2011}]\label{Thm: Imposing the gyrogroup structure on a set}
Let $G$ be a gyrogroup, let $X$ be an arbitrary set, and let
$\phi\colon X\to G$ be a bijection. Then $X$ endowed with the
induced operation $a\oplus_X b:= \phi^{-1}(\phi(a)\oplus\phi(b))$
for $a, b\in X$ becomes a gyrogroup.
\end{theorem}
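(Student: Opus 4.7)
The plan is to transport the gyrogroup axioms from $G$ to $X$ through the bijection $\phi$, which by the very definition of $\oplus_X$ satisfies $\phi(a\oplus_X b) = \phi(a)\oplus\phi(b)$ for all $a,b\in X$. Every axiom of Definition \ref{Def: Main definition of gyrogroup} holding in $(G,\oplus)$ therefore pulls back verbatim to $(X,\oplus_X)$ by applying $\phi^{-1}$, so the content of the proof is essentially bookkeeping.

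First I would designate $0_X := \phi^{-1}(0)$ as the candidate left identity and, for each $a\in X$, take $\phi^{-1}(\ominus\phi(a))$ as the candidate left inverse of $a$. Axiom (G1) then reduces to $0_X\oplus_X a = \phi^{-1}(0\oplus\phi(a)) = \phi^{-1}(\phi(a)) = a$, and (G2) is handled by the analogous one-line computation. For axiom (G3), for each pair $a,b\in X$ I would define a candidate gyroautomorphism $\Gam_{a,b}\colon X\to X$ by
$$\Gam_{a,b}(c) := \phi^{-1}\bigl(\gyr{\phi(a),\phi(b)}{\phi(c)}\bigr),\qquad c\in X.$$
Since $\phi$ and the gyroautomorphism $\gyr{\phi(a),\phi(b)}{}$ are bijections, so is $\Gam_{a,b}$.

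The key check is that $\Gam_{a,b}\in\Aut{X,\oplus_X}$, and this reduces to one line:
$$\Gam_{a,b}(c\oplus_X d) = \phi^{-1}\bigl(\gyr{\phi(a),\phi(b)}{\phi(c)}\oplus\gyr{\phi(a),\phi(b)}{\phi(d)}\bigr) = \Gam_{a,b}(c)\oplus_X\Gam_{a,b}(d),$$
using the definition of $\oplus_X$ together with the fact that $\gyr{\phi(a),\phi(b)}{}$ is an automorphism of $(G,\oplus)$. Left gyroassociativity and the loop property for $(X,\oplus_X)$ with respect to $\Gam_{a,b}$ now follow by applying $\phi^{-1}$ to the corresponding identities valid in $G$; for instance, (G3) unfolds as $a\oplus_X(b\oplus_X c) = \phi^{-1}(\phi(a)\oplus(\phi(b)\oplus\phi(c))) = \phi^{-1}((\phi(a)\oplus\phi(b))\oplus\gyr{\phi(a),\phi(b)}{\phi(c)}) = (a\oplus_X b)\oplus_X\Gam_{a,b}(c)$, and (G4) is equally immediate from $\gyr{\phi(a),\phi(b)}{} = \gyr{\phi(a)\oplus\phi(b),\phi(b)}{}$. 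I do not anticipate any genuine obstacle; the proof amounts to the formal observation that a bijection between magmas transports any equationally defined structure from one to the other, with $\phi$ emerging as a gyrogroup isomorphism by construction.
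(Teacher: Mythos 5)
Your proof is correct. The paper itself gives no proof of this statement---it is quoted as Theorem 1 of the cited reference \cite{MF2011}---but your transport-of-structure argument (pulling back the identity, inverses, and the gyroautomorphisms $\Gam_{a,b} = \phi^{-1}\circ\gyr{\phi(a),\phi(b)}{}\circ\phi$ through the bijection, then verifying (G1)--(G4) by applying $\phi^{-1}$ to the corresponding identities in $G$) is the standard and essentially the only natural way to establish it, and every step checks out.
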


\section{Cayley's Theorem}\label{Sec: Cayley's Theorem}
\par Recall that for $a\in\D$, the map $\tau_a$ that sends a complex number
$z$ to $a\oplus_M z$ defines a M\"{o}bius transformation or
conformal mapping on $\D$, known as a \textit{M\"{o}bius
translation}. In the literature, the following composition law of
M\"{o}bius translations is known:
\begin{equation}\label{eqn: composition law of Mobius translation}
\tau_a\circ\tau_b = \tau_{a\oplus_M b}\circ \gyr{a, b}{}
\end{equation}
for all $a, b\in\D$. In this section, we extend the composition law
(\ref{eqn: composition law of Mobius translation}) to an
\mbox{arbitrary} gyrogroup $G$. We also show that the symmetric
group of $G$ admits the gyrogroup structure induced by $G$, thus
obtaining an analog of Cayley's theorem for gyro-groups.
\par Throughout this section, $G$ and $H$ are arbitrary
gyrogroups.
\par For each $a\in G$, the \textit{left
gyrotranslation by $a$} and the \textit{right gyrotranslation by
$a$} are defined on $G$ by
\begin{equation}\label{Eqn: left and right trans}
L_a\colon x\mapsto a\oplus x\hskip0.3cm\textrm{ and }\hskip0.3cm
R_a\colon x\mapsto x\oplus a.
\end{equation}

\begin{theorem}\label{thm: induced gyrogroup}
Let $G$ be a gyrogroup.
\begin{enumerate}
    \item\label{item: left translation is a permutation} {The left gyrotranslations are permutations of $G$.}
    \item\label{item: Gbar} {Denote the set of all left gyrotranslations of $G$ by $\lbar{G}$.
    The map $\psi\colon G\to \lbar{G}$
    defined by $\psi(a) = L_a$ is bijective. The inverse map
    $\phi:= \psi^{-1}$ fulfills the condition in Theorem \ref{Thm: Imposing the gyrogroup structure on a set}.
    In this case, the induced operation $\oplus_{\lbar{G}}$ is given by
    $$L_a\oplus_{\lbar{G}} L_b = L_{a\oplus b}$$ for all $a, b\in G$.}
    \item\label{item: the composition law} {For all $a, b, c\in G$,
    \begin{equation}\label{eqn: the composition law in Sym (G)}
    L_a\circ L_b = L_{a\oplus b}\circ \gyr{a, b}{}
    \end{equation}
    and
    \begin{equation}\label{eqn: gyrators in induced gyrogroup}
    \mathrm{gyr}_{\lbar{G}}[L_a, L_b]{L_c} = L_{\gyr{a, b}{c}}.
    \end{equation}}
\end{enumerate}
\end{theorem}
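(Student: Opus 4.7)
The plan is to handle the three claims in order, drawing on left cancellation, left gyroassociativity, and the transport-of-structure result (Theorem \ref{Thm: Imposing the gyrogroup structure on a set}).

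For part (\ref{item: left translation is a permutation}), injectivity of $L_a$ follows from the general left cancellation law in Theorem \ref{thm: cancellation law in gyrogroup}(1), and surjectivity from Theorem \ref{Thm: Linear equations in gyrogroup}, which exhibits $\ominus a\oplus b$ as the unique solution of $a\oplus x = b$. For part (\ref{item: Gbar}), $\psi$ is injective because $\psi(a) = \psi(b)$ forces $a = L_a(0) = L_b(0) = b$, and it is surjective by the very definition of $\lbar{G}$. Setting $\phi := \psi^{-1}$, Theorem \ref{Thm: Imposing the gyrogroup structure on a set} endows $\lbar{G}$ with a gyrogroup structure; unwinding the induced operation yields $L_a \iplus L_b = \phi^{-1}(\phi(L_a)\oplus \phi(L_b)) = \phi^{-1}(a\oplus b) = L_{a\oplus b}$ immediately.

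For part (\ref{item: the composition law}), the first identity is just the left gyroassociative law (g3a) read pointwise: $(L_a\circ L_b)(c) = a\oplus (b\oplus c) = (a\oplus b)\oplus \gyr{a,b}{c} = (L_{a\oplus b}\circ\gyr{a,b}{})(c)$ for every $c\in G$. The second identity takes a short bookkeeping argument. I would first note that $L_0 = \Id{G}$ is the neutral element of $(\lbar{G},\iplus)$ and that $L_{\ominus a}\iplus L_a = L_{\ominus a\oplus a} = L_0$, so the $\iplus$-inverse of $L_a$ is $L_{\ominus a}$. Substituting into the gyrator identity of Theorem \ref{Thm: Alternative def. of gyrogroup}(g2) applied inside $\lbar{G}$, together with repeated use of $L_x\iplus L_y = L_{x\oplus y}$ from part (\ref{item: Gbar}), collapses $\mathrm{gyr}_{\lbar{G}}[L_a,L_b]L_c$ to $L_{\ominus(a\oplus b)\oplus(a\oplus(b\oplus c))}$, and the right-hand side equals $L_{\gyr{a,b}{c}}$ by the gyrator identity in $G$ itself.

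No single step looks genuinely hard; the main point to keep straight is the bookkeeping between the two gyrogroup structures, making sure that the $\lbar{G}$-gyrator really does reduce to the $G$-gyrator. Conceptually this just expresses that $\psi$ is a gyrogroup isomorphism $(G,\oplus)\to(\lbar{G},\iplus)$, so gyrator computations in $\lbar{G}$ mirror those in $G$ termwise; this observation could serve as a more conceptual, coordinate-free route to (\ref{eqn: gyrators in induced gyrogroup}) in place of the direct calculation.
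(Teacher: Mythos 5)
Your proposal is correct and follows essentially the same route as the paper: left cancellation and unique solvability for part (1), transport of structure via Theorem \ref{Thm: Imposing the gyrogroup structure on a set} for part (2), and the gyrator identity (equivalently, left gyroassociativity read pointwise) for both identities in part (3). The only cosmetic difference is that the paper derives (\ref{eqn: the composition law in Sym (G)}) by writing $\gyr{a,b}{} = L_{a\oplus b}^{-1}\circ L_a\circ L_b$ rather than evaluating at a point, which is the same computation.
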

\begin{proof} Let $a, b\in G$.
\par (\ref{item: left translation is a permutation}) That $L_a$ is
injective follows from the general left cancellation law. That $L_a$
is surjective follows from Theorem \ref{Thm: Linear equations in
gyrogroup}.
\par (\ref{item: Gbar}) That $\psi$ is bijective is clear. By
Theorem \ref{Thm: Imposing the gyrogroup structure on a set}, the
induced operation is given by
    $$
    L_a\oplus_{\lbar{G}} L_b =
    \psi(\psi^{-1}(L_a)\oplus\psi^{-1}(L_b))=\psi(a\oplus b)= L_{a\oplus b}.
    $$
\par (\ref{item: the composition law}) By the left cancellation law, $L^{-1}_a = L_{\ominus a}$.
    By the gyrator identity, $\gyr{a, b}{} = L_{\ominus(a\oplus b)}\circ L_a\circ L_b$ and
    hence $\gyr{a, b}{} = L^{-1}_{a\oplus b}\circ L_a\circ
    L_b$. It follows that $L_a\circ L_b = L_{a\oplus b}\circ \gyr{a,
    b}{}$. Equation (\ref{eqn: gyrators in induced gyrogroup})
    follows from the gyrator identity.
\end{proof}

\par Let $\St{0}$ denote the set of permutations of $G$
leaving the gyrogroup identity fixed,
$$\St{0} = \cset{\rho\in \sym{G}}{\rho(0) = 0}.$$
It is clear that $\St{0}$ is a subgroup of the symmetric group,
$\sym{G}$, and we have the following inclusions:
$$\cset{\gyr{a, b}{}}{a, b\in G}\subseteq \Aut{G} \leqslant \St{0}\leqslant \sym{G}.$$

\par The next theorem enables us to introduce a binary
operation $\oplus$ on the symmetric group of $G$ so that $\sym{G}$
equipped with $\oplus$ becomes a gyrogroup containing an isomorphic
copy of $G$.

\begin{theorem}\label{thm: induced gyrogroup is a transversal}
For each $\sig\in \sym{G}$, $\sig$ can be written uniquely as $\sig
= L_a\circ \rho$, where $a\in G$ and $\rho\in \St{0}$.
\end{theorem}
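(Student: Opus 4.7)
The plan is to prove both existence and uniqueness directly, exploiting the fact that $L_a$ is a permutation of $G$ (Theorem \ref{thm: induced gyrogroup}\,(\ref{item: left translation is a permutation})) with explicit inverse $L_a^{-1} = L_{\ominus a}$, which follows from the left cancellation law (Theorem \ref{thm: cancellation law in gyrogroup}).

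For existence, given $\sig\in\sym{G}$, the only reasonable choice for $a$ is dictated by evaluation at the identity: if $\sig = L_a\circ\rho$ with $\rho(0)=0$, then $\sig(0) = L_a(\rho(0)) = L_a(0) = a\oplus 0 = a$, so I would \emph{define} $a := \sig(0)$ and then set $\rho := L_{\ominus a}\circ\sig$. Since $\rho$ is a composition of permutations, it lies in $\sym{G}$, and a one-line check gives $\rho(0) = L_{\ominus a}(\sig(0)) = (\ominus a)\oplus a = 0$, so $\rho\in\St{0}$. By construction $L_a\circ\rho = L_a\circ L_{\ominus a}\circ\sig = \sig$, giving the required factorization.

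For uniqueness, suppose $\sig = L_a\circ\rho = L_b\circ\rho'$ with $a,b\in G$ and $\rho,\rho'\in\St{0}$. Evaluating at $0$ and using $\rho(0)=\rho'(0)=0$ yields $a = L_a(0) = \sig(0) = L_b(0) = b$; cancelling $L_a$ (it is a permutation) then forces $\rho = \rho'$.

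I do not expect any genuine obstacle here: the entire argument hinges on the identity $L_a^{-1} = L_{\ominus a}$ already recorded in the proof of Theorem \ref{thm: induced gyrogroup}\,(\ref{item: the composition law}), and the rest is bookkeeping with the two-sided identity axiom (g1). The result should be stated and proved as a short ``unique factorization / transversal'' lemma, which then sets the stage for transporting $\oplus$ from $\lbar{G}\times\St{0}$ to $\sym{G}$ in the subsequent construction.
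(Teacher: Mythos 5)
Your proof is correct and follows essentially the same route as the paper's: both arguments define $a:=\sigma(0)$, set $\rho:=L_{\ominus a}\circ\sigma$ using $L_a^{-1}=L_{\ominus a}$, and obtain uniqueness by evaluating at $0$ and cancelling $L_a$. There is nothing to add.
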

\begin{proof} Suppose that $L_a\circ\rho = L_b\circ\eta$, where
$a, b\in G$ and $\rho, \eta\in\St{0}$. Then $a = (L_a\circ\rho)(0) =
(L_b\circ \eta)(0) = b$, which implies $L_a = L_b$ and so $\rho =
\eta$. This proves the uniqueness of factorization. Let $\sig$ be an
arbitrary permutation of $G$. Choose $a = \sig(0)$ and set $\rho =
L_{\ominus a}\circ \sig$. Note that $\rho(0) = L_{\ominus a}(a) =
\ominus a\oplus a = 0$. Hence, $\rho\in\St{0}$. Since $L_{\ominus a}
= L^{-1}_a$, $\sig = L_a\circ\rho$. This proves the existence of
factorization.
\end{proof}

\par The following \textit{commutation relation} determines how to commute
a left gyrotranslation and an automorphism of $G$:
\begin{equation}\label{eqn: commuting relation}
\rho\circ L_a = L_{\rho(a)}\circ \rho
\end{equation}
whenever $\rho$ is an automorphism of $G$.

\par Let $\sig$ and $\tau$ be permutations of $G$. By
Theorem \ref{thm: induced gyrogroup is a transversal}, $\sig$ and
$\tau$ have factorizations $\sig = L_a\circ\gam$ and $\tau =
L_b\circ \del$, where $a, b\in G$ and $\gam,\del\in\St{0}$. Define
an operation $\oplus$ on $\sym{G}$ by
\begin{equation}\label{eqn: gyrogroup operation on Sym (G)}
\sig\oplus\tau = L_{a\oplus b}\circ(\gam\circ\del).
\end{equation}
Because of the uniqueness of factorization, $\oplus$ is a binary
operation on $\sym{G}$. In fact, $(\sym{G}, \oplus)$ forms a
gyrogroup.

\begin{theorem}\label{thm: Sym G with + is a gyrogroup}
$\sym{G}$ is a gyrogroup under the operation defined by (\ref{eqn:
gyrogroup operation on Sym (G)}), and $$L_a\oplus L_b = L_a\iplus
L_b = L_{a\oplus b}$$ for all $a, b\in G$. In particular, the map
$a\mapsto L_a$ defines an injective gyrogroup homomorphism from $G$
into $\sym{G}$.
\end{theorem}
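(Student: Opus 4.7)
The plan is to verify gyrogroup axioms (G1)--(G4) of Definition~\ref{Def: Main definition of gyrogroup} for $(\sym{G},\oplus)$ by direct computation from (\ref{eqn: gyrogroup operation on Sym (G)}), working throughout with the unique factorization $\sig=L_a\circ\gam$ with $a\in G$ and $\gam\in\St{0}$ granted by Theorem~\ref{thm: induced gyrogroup is a transversal}. The key structural feature of (\ref{eqn: gyrogroup operation on Sym (G)}) is that the ``translation part'' composes by $\oplus$ in $G$ while the ``stabilizer part'' composes by ordinary composition in $\St{0}$, so each axiom for $(\sym{G},\oplus)$ should split into a copy of the corresponding axiom in $G$ and a trivial statement about composition of permutations.

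For (G1) I would take $\Id{G}=L_0$ as the left identity, noting that its factorization is $L_0\circ\Id{G}$ since $\Id{G}\in\St{0}$, so that $\Id{G}\oplus\tau=L_{0\oplus b}\circ(\Id{G}\circ\del)=\tau$ for any $\tau=L_b\circ\del$. For (G2) the left inverse of $\sig=L_a\circ\gam$ is $L_{\ominus a}\circ\gam^{-1}$, using that $\gam\in\St{0}$ forces $\gam^{-1}\in\St{0}$. The heart of the proof is (G3), and the one creative step is to guess the gyration. Motivated by comparing $(\sig\oplus\tau)\oplus\mu$ with $\sig\oplus(\tau\oplus\mu)$ when $\mu=L_c\circ\epsilon$, the natural candidate is
\[
\gyr{\sig,\tau}{\mu}:=L_{\gyr{a,b}{c}}\circ\epsilon.
\]
Checking that this lies in $\Aut{\sym{G},\oplus}$ is the main technical point, but it reduces to the facts that $\gyr{a,b}{}\in\Aut{G}$ distributes over $\oplus$ in $G$ and that the $\St{0}$-components compose identically on both sides of the desired equality. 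The left gyroassociative law then collapses to the gyroassociative law in $G$, and since $\sig\oplus\tau$ has factorization $L_{a\oplus b}\circ(\gam\circ\del)$, the left loop property (G4) collapses to (g4a) in $G$.

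The remaining claims are bookkeeping. Since $L_a=L_a\circ\Id{G}$ is its own unique factorization with trivial stabilizer part, (\ref{eqn: gyrogroup operation on Sym (G)}) specializes to $L_a\oplus L_b=L_{a\oplus b}$, which coincides with $L_a\iplus L_b$ from Theorem~\ref{thm: induced gyrogroup}. The map $a\mapsto L_a$ therefore preserves $\oplus$, and it is injective because $L_a(0)=a$ recovers $a$ from $L_a$. The main obstacle, as indicated, is producing the correct candidate for the gyration; once it is written down, every axiom verification is a short calculation that directly quotes the corresponding property of $G$ and disposes of the $\St{0}$-component trivially.
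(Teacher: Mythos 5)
Your proposal is correct and follows essentially the same route as the paper: the same left identity $\Id{G}=L_0$, the same left inverse $L_{\ominus a}\circ\gam^{-1}$, the same candidate gyration $\gyr{\sig,\tau}{\mu}=L_{\gyr{a,b}{c}}\circ\epsilon$, and the same reduction of the gyroassociative and loop properties to those of $G$ via the unique factorization of Theorem~\ref{thm: induced gyrogroup is a transversal}. The paper's proof is merely a terser version of exactly this argument.
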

\begin{proof}
Suppose that $\sig = L_a\circ\gam$, $\tau = L_b\circ \del$ and $\rho
= L_c\circ \lamb$, where $a, b, c\in G$ and $\gam, \del, \lamb\in
\St{0}$. The identity map $\Id{G}$ acts as a left identity of
$\sym{G}$ and $L_{\ominus a}\circ\gam^{-1}$ is a left inverse of
$\sig$ with respect to $\oplus$. The gyroautomorphisms of $\sym{G}$
are given by
$$
\gyr{\sig,\tau}{\rho} = (\gyr{L_a, L_b}{L_c})\circ \lamb =
L_{\gyr{a, b}{c}}\circ\lamb.
$$
Since $G$ satisfies the left gyroassociative law and the left loop
property, so does $\sym{G}$.
\end{proof}

\par By Theorem \ref{thm: Sym G with + is a gyrogroup}, the
following version of Cayley's theorem for gyrogroups is
\mbox{immediate}.
\begin{corollary}[Cayley's Theorem]
Every gyrogroup is isomorphic to a subgyro-group of the gyrogroup of
permutations.
\end{corollary}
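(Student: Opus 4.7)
The plan is to package Theorem \ref{thm: induced gyrogroup} and Theorem \ref{thm: Sym G with + is a gyrogroup} into an explicit isomorphism between $G$ and the image of the ``left regular representation'' $\psi \colon a\mapsto L_a$ sitting inside $\sym{G}$. Since Theorem \ref{thm: Sym G with + is a gyrogroup} already endows $\sym{G}$ with a gyrogroup operation $\oplus$ and declares that $\psi$ is an injective gyrogroup homomorphism, the only work left is to check that the image $\lbar{G} = \psi(G) = \cset{L_a}{a\in G}$ is a subgyrogroup of $(\sym{G},\oplus)$, after which $\psi$ viewed as a map onto $\lbar{G}$ is automatically a gyrogroup isomorphism.

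For the subgyrogroup verification I would run down the standard list, each item of which reduces to a formula established earlier. Closure under $\oplus$ is immediate from $L_a\oplus L_b = L_{a\oplus b}$. The ambient identity $\Id{G}$ is $L_0$, so $\lbar{G}$ contains the identity of $\sym{G}$. For inverses, the proof of Theorem \ref{thm: induced gyrogroup}(\ref{item: the composition law}) notes $L^{-1}_a = L_{\ominus a}$, and inspecting the factorization used in \eqref{eqn: gyrogroup operation on Sym (G)} shows $L_{\ominus a}$ is the two-sided inverse of $L_a$ with respect to $\oplus$, so $\lbar{G}$ is closed under $\ominus$. Finally, equation \eqref{eqn: gyrators in induced gyrogroup} gives $\mathrm{gyr}_{\lbar{G}}[L_a, L_b]{L_c} = L_{\gyr{a, b}{c}}\in \lbar{G}$, so $\lbar{G}$ is invariant under the gyroautomorphisms of $\sym{G}$.

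Once $\lbar{G}$ is recognized as a subgyrogroup, the corestriction $\psi\colon G\to \lbar{G}$ is a surjective, injective gyrogroup homomorphism by construction, hence an isomorphism, proving $G\cong \lbar{G}\leqslant \sym{G}$. I do not expect a real obstacle: every ingredient has been proved in Section \ref{Sec: Cayley's Theorem}, and the corollary is essentially a matter of naming $\lbar{G}$ and invoking $\psi$. The only nuance worth flagging is that the notion of subgyrogroup used here must include invariance under the ambient gyroautomorphisms, and this last condition is precisely what \eqref{eqn: gyrators in induced gyrogroup} supplies for free.
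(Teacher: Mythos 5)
Your proposal is correct and follows the same route as the paper: the paper's proof is simply the observation that $a\mapsto L_a$ is a gyrogroup isomorphism from $G$ onto $\lbar{G}$ and that $\lbar{G}$ is a subgyrogroup of $\sym{G}$, both of which follow from Theorems \ref{thm: induced gyrogroup} and \ref{thm: Sym G with + is a gyrogroup}. Your more detailed verification of the subgyrogroup conditions (closure, inverses, gyroautomorphism invariance via \eqref{eqn: gyrators in induced gyrogroup}) is exactly the content the paper leaves implicit, and note that by Proposition \ref{prop: criterion for subgyrogroup} the closure checks alone would already suffice.
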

\begin{proof}
The map $a\mapsto L_a$ defines a gyrogroup isomorphism from $G$ onto
$\lbar{G}$ and $\lbar{G}$ is a subgyrogroup of $\sym{G}$.
\end{proof}

\section{L-Subgyrogroups}\label{Sec: L-subgyrogroup}
\par Throughout this section, $G$ is an arbitrary
gyrogroup.
\par A nonempty subset $H$ of $G$ is a \textit{subgyrogroup}
if $H$ forms a gyrogroup under the operation inherited from $G$ and
the restriction of $\gyr{a, b}{}$ to $H$ is an automorphism of $H$
for all $a, b\in H$. If $H$ is a subgyrogroup of $G$, then we write
$H\leqslant G$ as in the group case.

\begin{proposition}[The Subgyrogroup Criterion]\label{prop: criterion for subgyrogroup}
A nonempty subset $H$ of $G$ is a subgyrogroup if and only if
$\ominus a \in H$ and $a\oplus b\in H$ for all $a, b\in H$.
\end{proposition}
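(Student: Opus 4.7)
The strategy is to handle the two implications separately. The forward implication is essentially bookkeeping: closure under $\oplus$ is definitional, and one only needs to confirm that the identity and inverses in the subgyrogroup coincide with those in $G$. To see that the $H$-identity $0_H$ equals $0$, I would apply the general left cancellation law of Theorem \ref{thm: cancellation law in gyrogroup} to the equality $0\oplus 0_H = 0_H = 0_H\oplus 0_H$. To see that the $H$-inverse of $a$ equals $\ominus a$, I would observe that both $b\oplus a$ (with $b$ the $H$-inverse of $a$) and $\ominus a\oplus a$ equal $0$, and then cancel $a$ from the right, which is legitimate because the right cancellation law II (Theorem \ref{thm: cancellation law in gyrogroup}) shows that $R_a\colon x\mapsto x\oplus a$ has $x\mapsto x\boxplus(\ominus a)$ as a two-sided inverse.

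For the reverse implication, let $H$ be nonempty and closed under $\ominus$ and $\oplus$. Picking any $a\in H$ gives $\ominus a\in H$ and then $0 = \ominus a\oplus a\in H$, so $0$ is a two-sided identity of $H$ and $\ominus a\in H$ is a two-sided inverse of $a$. The left gyroassociative law and the left loop property for $H$ are inherited term-by-term from $G$ once each $\gyr{a,b}{}$ with $a, b\in H$ is known to restrict to an automorphism of $H$. Granted this, axioms (G1)--(G4) hold for $H$ under the inherited operation, and $H$ meets the definition of a subgyrogroup.

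The main obstacle is precisely this restriction claim. The gyrator identity $\gyr{a, b}{c} = \ominus(a\oplus b)\oplus(a\oplus(b\oplus c))$ together with the closure hypotheses on $H$ immediately yields $\gyr{a, b}{(H)}\subseteq H$. One cannot directly cite Proposition \ref{prop: gyr(X) subset X all a, b implies equality} to upgrade this to equality, because that proposition requires the inclusion to hold for \emph{all} $a, b\in G$, not merely for $a, b\in H$. Instead, I would exploit the inversive symmetry $\gyr{b, a}{} = \igyr{a, b}{}$ from Theorem \ref{Thm: Gyroautomorphism are even and inversive symmetric}: since $b, a$ also lie in $H$, the same gyrator-identity argument gives $\gyr{b, a}{(H)}\subseteq H$. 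Thus $\gyr{a, b}{}|_H$ and $\gyr{b, a}{}|_H$ are mutually inverse self-maps of $H$, so each is bijective; and since $\gyr{a, b}{}$ is already a homomorphism on all of $G$, its restriction is a homomorphism, hence an element of $\Aut{H,\oplus}$. This delivers the last missing piece and completes the criterion.
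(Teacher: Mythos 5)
Your argument for the substantive (reverse) implication is exactly the paper's: the gyrator identity gives $\gyr{a, b}{(H)}\subseteq H$, inversive symmetry applied to $\gyr{b, a}{(H)}\subseteq H$ supplies the reverse inclusion, and hence the restriction is an automorphism of $H$; your remark that Proposition \ref{prop: gyr(X) subset X all a, b implies equality} cannot be invoked directly is precisely why the paper argues this way. The forward direction, which the paper dismisses as trivial, you handle correctly except for one mislabel: from $0\oplus 0_H = 0_H\oplus 0_H$ you must cancel $0_H$ on the \emph{right} (or instead left-cancel $0_H$ in $0_H\oplus 0 = 0_H\oplus 0_H$), not apply the general left cancellation law as written.
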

\begin{proof}
Axioms (G1), (G2), (G4) hold trivially. Let $a, b\in H$. By the
gyrator identity, $\gyr{a, b}{(H)}\subseteq H$. Since the
gyroautomorphisms are inversive symmetric (Theorem \ref{Thm:
Gyroautomorphism are even and inversive symmetric}), we also have
the reverse inclusion. Thus, the restriction of $\gyr{a, b}{}$ to
$H$ is an automorphism of $H$ and so axiom (G3) holds.
\end{proof}

\par Let $H$ be a subgyrogroup of $G$. In contrast to groups, the relation
\begin{equation}\label{relation: -a+b}
a\sim b\hskip0.5cm\textrm{if and only if}\hskip0.5cm \ominus a\oplus
b\in H
\end{equation}
does not, in general, define an equivalence relation on $G$.
Nevertheless, we can modify (\ref{relation: -a+b}) to obtain an
equivalence relation on $G$. From this point of view, any
subgyrogroup of $G$ partitions $G$. This leads to the introduction
of L-subgyrogroups.

\par Let $H$ be a subgyrogroup of $G$.
Define a relation $\sim_H$ on $G$ by letting
\begin{equation}\label{relation sim_H}
a \sim_H b\hskip0.3cm\textrm{if and only if}\hskip0.3cm \ominus a
\oplus b\in H \textrm{ and } \gyr{\ominus a, b}{(H)} = H.
\end{equation}

\newpage

\begin{theorem}\label{thm: Sim_H is an equivalence relation}
The relation $\sim_H$ defined by (\ref{relation sim_H}) is an
equivalence relation on $G$.
\end{theorem}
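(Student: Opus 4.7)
The plan is to verify reflexivity, symmetry, and transitivity for $\sim_H$, treating the two requirements $\ominus a\oplus b\in H$ and $\gyr{\ominus a, b}{(H)} = H$ in parallel at each step.

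Reflexivity is immediate: $\ominus a\oplus a = 0\in H$ because $H$ is a subgyrogroup, and $\gyr{\ominus a, a}{} = \gyr{\ominus a\oplus a, a}{} = \gyr{0, a}{} = \Id{G}$, using the left loop property (g4a) together with the fact that $\gyr{0, x}{}$ is the identity (a one-line consequence of the gyrator identity and the left cancellation law). For symmetry, suppose $a\sim_H b$. The Gyrosum Inversion Law (Theorem \ref{Thm: Gyrosum inversion law}) gives $\ominus(\ominus a\oplus b) = \gyr{\ominus a, b}{(\ominus b\oplus a)}$, and since the left side lies in $H$ and $\gyr{\ominus a, b}{(H)} = H$, we deduce $\ominus b\oplus a\in H$. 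For the gyration condition, the even and inversive-symmetric laws of Theorem \ref{Thm: Gyroautomorphism are even and inversive symmetric} combine to give $\gyr{\ominus b, a}{} = \igyr{\ominus a, b}{}$, whence $\gyr{\ominus b, a}{(H)} = H$.

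Transitivity is the main obstacle. Assume $a\sim_H b$ and $b\sim_H c$. That $\ominus a\oplus c\in H$ follows directly from Theorem \ref{Thm: Similar group property -a+b + -b+c = -a + c}, which rewrites $\ominus a\oplus c = (\ominus a\oplus b)\oplus\gyr{\ominus a, b}{(\ominus b\oplus c)}$: the first summand lies in $H$ by hypothesis, the second lies in $\gyr{\ominus a, b}{(H)} = H$, and $H$ is closed. The delicate step is $\gyr{\ominus a, c}{(H)} = H$. I would derive, from computing $p\oplus(q\oplus(r\oplus t))$ in two ways via (g3a), the cocycle-type identity
\begin{equation*}
\gyr{p, q\oplus r}{}\circ\gyr{q, r}{} = \gyr{p\oplus q, \gyr{p, q}{r}}{}\circ\gyr{p, q}{},
\end{equation*}
and specialize to $p = \ominus a$, $q = b$, $r = \ominus b\oplus c$, so that $q\oplus r = c$ by left cancellation. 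On the right-hand side, $\gyr{\ominus a, b}{}$ preserves $H$ by hypothesis, and the outer gyration is generated by $\ominus a\oplus b$ and $\gyr{\ominus a, b}{(\ominus b\oplus c)}$, both of which lie in $H$, so its restriction to $H$ is an automorphism of $H$ by the subgyrogroup axiom. On the left-hand side, successive use of (g4a), (g4b), and inversive symmetry collapses $\gyr{b, \ominus b\oplus c}{}$ to $\igyr{\ominus b, c}{}$, which preserves $H$ since $\gyr{\ominus b, c}{}$ does. Cancelling this factor yields $\gyr{\ominus a, c}{(H)} = H$, completing the argument.
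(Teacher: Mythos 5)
Your proposal is correct and follows essentially the same route as the paper: reflexivity and symmetry are handled identically (via the gyrosum inversion law and the even/inversive-symmetric properties), and for transitivity you arrive at exactly the paper's key identity $\gyr{\ominus a, c}{} = \gyr{\ominus a\oplus b, \gyr{\ominus a, b}{(\ominus b\oplus c)}}{}\circ\gyr{\ominus a, b}{}\circ\gyr{\ominus b, c}{}$. The only cosmetic difference is that you derive this identity by expanding $p\oplus(q\oplus(r\oplus t))$ in two ways and then collapsing $\gyr{b,\ominus b\oplus c}{}$ to $\igyr{\ominus b, c}{}$ via the loop properties, whereas the paper obtains it from the composition law $L_a\circ L_b = L_{a\oplus b}\circ\gyr{a,b}{}$ in $\sym{G}$ together with the commutation relation---two packagings of the same computation.
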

\begin{proof} Let $a, b, c\in G$.
Since $\ominus a \oplus a = 0\in H$ and $\gyr{\ominus a, a}{} =
\Id{G}$, $a\sim_H a$. Hence, $\sim_H$ is reflexive. Suppose that
$a\sim_H b$. By Theorem \ref{Thm: Gyrosum inversion law},
$\gyr{\ominus a, b}{(\ominus b \oplus a)} = \ominus(\ominus a\oplus
b)$. Hence, $\ominus b\oplus a = \igyr{\ominus a,
b}{(\ominus(\ominus a\oplus b))}$, which implies \mbox{$\ominus b
\oplus a\in H$} since $\igyr{\ominus a, b}{(H)} = H$. By Theorem
\ref{Thm: Gyroautomorphism are even and inversive symmetric},
$$\gyr{\ominus a, b}{} = \gyr{\ominus a, \ominus(\ominus b)}{} =
\gyr{a, \ominus b}{} = \igyr{\ominus b, a}{}.$$ Hence, $\gyr{\ominus
b, a}{} = \igyr{\ominus a, b}{}$. Since $\gyr{\ominus a, b}{(H)} =
H$, $\gyr{\ominus b, a}{(H)} = H$ as well. This proves $b\sim_H a$
and so $\sim_H$ is symmetric. Suppose that $a\sim_H b$ and $b\sim_H
c$. By Theorem \ref{Thm: Similar group property -a+b + -b+c = -a +
c}, $\ominus a \oplus c = (\ominus a \oplus b) \oplus \gyr{\ominus
a, b}{(\ominus b\oplus c)}$ and so $\ominus a \oplus c\in H$. Using
the composition law (\ref{eqn: the composition law in Sym (G)}) and
the commutation relation (\ref{eqn: commuting relation}), we have
$\gyr{\ominus a, c}{} = \gyr{\ominus a\oplus b, \gyr{\ominus a,
b}{(\ominus b\oplus c)}}{}\circ\gyr{\ominus a, b}{}\circ\gyr{\ominus
b, c}{}$. This implies $\gyr{\ominus a, c}{(H)} = H$ and so $a\sim_H
c$. This proves $\sim_H$ is transitive.
\end{proof}

\par Let $a\in G$. Let $[a]$ denote the equivalence class of $a$
determined by $\sim_H$. Theorem \ref{thm: Sim_H is an equivalence
relation} says that $\cset{[a]}{a\in G}$ is a partition of $G$. Set
$a\oplus H := \cset{a\oplus h}{h\in H}$, called the \textit{left
coset of $H$ induced by $a$}.

\begin{proposition}\label{prop: equivalence class in left coset}
For each $a\in G$, $[a]\subseteq a\oplus H$.
\end{proposition}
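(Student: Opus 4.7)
The plan is quite short, as the statement is essentially a direct consequence of the definition of $\sim_H$ and left cancellation. I would take an arbitrary $b \in [a]$ and aim to exhibit an element $h \in H$ with $b = a \oplus h$.

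First, unpacking the definition (\ref{relation sim_H}), $b \in [a]$ means in particular that $h := \ominus a \oplus b$ lies in $H$ (the extra gyroautomorphism condition $\gyr{\ominus a, b}{(H)} = H$ is not needed for this inclusion; it is there to make $\sim_H$ transitive and symmetric, which is what Theorem \ref{thm: Sim_H is an equivalence relation} already uses).

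Next, I would invoke Theorem \ref{Thm: Linear equations in gyrogroup}: the equation $a \oplus x = b$ has the unique solution $x = \ominus a \oplus b = h$. Equivalently, one may cite the left cancellation law to get $a \oplus h = a \oplus (\ominus a \oplus b) = b$. Either way, $b = a \oplus h$ with $h \in H$, hence $b \in a \oplus H$.

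There is no real obstacle here; the content of the proposition is that the ``hard half'' of the coset condition (namely, $\ominus a \oplus b \in H$) is built directly into the definition of $\sim_H$, and the left cancellation law converts this into membership in $a \oplus H$. The converse inclusion $a \oplus H \subseteq [a]$ is the substantive statement, and would require verifying both conditions of (\ref{relation sim_H}) for elements of the form $a \oplus h$; that is presumably handled separately, after the notion of L-subgyrogroup is introduced.
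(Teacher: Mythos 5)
Your proof is correct and follows exactly the paper's argument: extract $h=\ominus a\oplus b\in H$ from the definition of $\sim_H$ and apply the left cancellation law to get $b=a\oplus h\in a\oplus H$. Your side remarks about the role of the gyroautomorphism condition and the converse inclusion are accurate as well.
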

\begin{proof}
If $x\in[a]$, by (\ref{relation sim_H}), $\ominus a\oplus x\in H$.
Hence, $x = a\oplus (\ominus a \oplus x) \in a\oplus H$.
\end{proof}

\par Proposition \ref{prop: equivalence class in left coset} leads to
the notion of L-subgyrogroups:
\begin{definition}\label{def: L-subgyrogroup}
A subgyrogroup $H$ of $G$ is said to be an L-subgyrogroup, denoted
by $H\leqslant_L G$, if $\gyr{a, h}{(H)} = H$ for all $a\in G$ and
$h\in H$.
\end{definition}

\begin{example}\label{Exa: K_16}
\par In \cite[p. 41]{AU2002}, Ungar exhibited the gyrogroup
$K_{16}$ whose addition table is presented in Table \ref{Tab:
Gyrogroup K16}. In $K_{16}$, there is only one nonidentity
gyroauto-morphism, denoted by $A$, whose transformation is given in
cyclic notation by
\begin{equation}\label{eqn: gyroautomorphism of K16}
A = (8\,\, 9)(10\,\, 11)(12\,\, 13)(14\,\, 15).
\end{equation}
The gyration table for $K_{16}$ is presented in Table \ref{Tab:
Gyration of K16}. According to (\ref{eqn: gyroautomorphism of K16}),
$H_1 = \set{0, 1}$, $H_2 = \set{0,1,2,3}$, and $H_3 =
\set{0,1,\dots, 7}$ are easily seen to be L-subgyrogroups of
$K_{16}$. In contrast, $H_4 = \set{0, 8}$ is a non-L-subgyrogroup of
$K_{16}$ since $\gyr{4, 8}{(H_4)}\ne H_4$.
\end{example}

\begin{table}[ht]
\centering
\begin{tabular}{|c|cccccccccccccccc|}
\hline $\oplus$ & 0 & 1 & 2 & 3 & 4 & 5 & 6 & 7 & 8 & 9 & 10 & 11 &
12 & 13 & 14 & 15 \\ \hline 0 & 0 & 1 & 2 & 3 & 4 & 5 & 6 & 7 & 8 &
9 & 10 &
11 & 12 & 13 & 14 & 15 \\
1 & 1 & 0 & 3 & 2 & 5 & 4 & 7 & 6 & 9 & 8 & 11 & 10 &
13 & 12 & 15 & 14 \\
2 & 2 & 3 & 1 & 0 & 6 & 7 & 5 & 4 & 11 & 10 & 8 & 9 &
15 & 14 & 12 & 13 \\
3 & 3 & 2 & 0 & 1 & 7 & 6 & 4 & 5 & 10 & 11 & 9 & 8 &
14 & 15 & 13 & 12 \\
4 & 4 & 5 & 6 & 7 & 3 & 2 & 0 & 1 & 15 & 14 & 12 & 13 &
9 & 8 & 11 & 10 \\
5 & 5 & 4 & 7 & 6 & 2 & 3 & 1 & 0 & 14 & 15 & 13 & 12 &
8 & 9 & 10 & 11 \\
6 & 6 & 7 & 5 & 4 & 0 & 1 & 2 & 3 & 13 & 12 & 15 & 14 &
10 & 11 & 9 & 8 \\
7 & 7 & 6 & 4 & 5 & 1 & 0 & 3 & 2 & 12 & 13 & 14 & 15 &
11 & 10 & 8 & 9 \\
8 & 8 & 9 & 10 & 11 & 12 & 13 & 14 & 15 & 0 & 1 & 2 & 3 &
4 & 5 & 6 & 7 \\
9 & 9 & 8 & 11 & 10 & 13 & 12 & 15 & 14 & 1 & 0 & 3 & 2 &
5 & 4 & 7 & 6 \\
10 & 10 & 11 & 9 & 8 & 14 & 15 & 13 & 12 & 3 & 2 & 0 & 1 &
7 & 6 & 4 & 5 \\
11 & 11 & 10 & 8 & 9 & 15 & 14 & 12 & 13 & 2 & 3 & 1 & 0 &
6 & 7 & 5 & 4 \\
12 & 12 & 13 & 14 & 15 & 11 & 10 & 8 & 9 & 6 & 7 & 5 & 4 &
0 & 1 & 2 & 3 \\
13 & 13 & 12 & 15 & 14 & 10 & 11 & 9 & 8 & 7 & 6 & 4 & 5 &
1 & 0 & 3 & 2 \\
14 & 14 & 15 & 13 & 12 & 8 & 9 & 10 & 11 & 4 & 5 & 6 & 7 &
3 & 2 & 0 & 1 \\
15 & 15 & 14 & 12 & 13 & 9 & 8 & 11 & 10 & 5 & 4 & 7 & 6 & 2 & 3 &
1& 0\\
\hline
\end{tabular}\vskip5pt
\caption{Addition table for the gyrogroup $K_{16}$,
\cite{AU2002}}\label{Tab: Gyrogroup K16}
\end{table}

\begin{table}[ht]
  \centering
  \begin{tabular}{|c|cccccccccccccccc|}
\hline $\textrm{gyr}$ & 0 & 1 & 2 & 3 & 4 & 5 & 6 & 7 & 8 & 9 & 10 &
11 & 12 & 13 & 14 & 15 \\ \hline 0 & $I$ & $I$ & $I$ & $I$ & $I$ &
$I$ & $I$ & $I$ & $I$ &
$I$ & $I$ & $I$ & $I$ & $I$ & $I$ & $I$ \\
1 & $I$ & $I$ & $I$ & $I$ & $I$ & $I$ & $I$ & $I$ & $I$ &
$I$ & $I$ & $I$ & $I$ & $I$ & $I$ & $I$ \\
2 & $I$ & $I$ & $I$ & $I$ & $I$ & $I$ & $I$ & $I$ & $I$ &
$I$ & $I$ & $I$ & $I$ & $I$ & $I$ & $I$ \\
3 & $I$ & $I$ & $I$ & $I$ & $I$ & $I$ & $I$ & $I$ & $I$ &
$I$ & $I$ & $I$ & $I$ & $I$ & $I$ & $I$ \\
4 & $I$ & $I$ & $I$ & $I$ & $I$ & $I$ & $I$ & $I$ & $A$ &
$A$ & $A$ & $A$ & $A$ & $A$ & $A$ & $A$ \\
5 & $I$ & $I$ & $I$ & $I$ & $I$ & $I$ & $I$ & $I$ & $A$ &
$A$ & $A$ & $A$ & $A$ & $A$ & $A$ & $A$ \\
6 & $I$ & $I$ & $I$ & $I$ & $I$ & $I$ & $I$ & $I$ & $A$ &
$A$ & $A$ & $A$ & $A$ & $A$ & $A$ & $A$ \\
7 & $I$ & $I$ & $I$ & $I$ & $I$ & $I$ & $I$ & $I$ & $A$ &
$A$ & $A$ & $A$ & $A$ & $A$ & $A$ & $A$ \\
8 & $I$ & $I$ & $I$ & $I$ & $A$ & $A$ & $A$ & $A$ & $I$ &
$I$ & $I$ & $I$ & $A$ & $A$ & $A$ & $A$ \\
9 & $I$ & $I$ & $I$ & $I$ & $A$ & $A$ & $A$ & $A$ & $I$ &
$I$ & $I$ & $I$ & $A$ & $A$ & $A$ & $A$ \\
10 & $I$ & $I$ & $I$ & $I$ & $A$ & $A$ & $A$ & $A$ & $I$ &
$I$ & $I$ & $I$ & $A$ & $A$ & $A$ & $A$ \\
11 & $I$ & $I$ & $I$ & $I$ & $A$ & $A$ & $A$ & $A$ & $I$ &
$I$ & $I$ & $I$ & $A$ & $A$ & $A$ & $A$ \\
12 & $I$ & $I$ & $I$ & $I$ & $A$ & $A$ & $A$ & $A$ & $A$ &
$A$ & $A$ & $A$ & $I$ & $I$ & $I$ & $I$ \\
13 & $I$ & $I$ & $I$ & $I$ & $A$ & $A$ & $A$ & $A$ & $A$ &
$A$ & $A$ & $A$ & $I$ & $I$ & $I$ & $I$ \\
14 & $I$ & $I$ & $I$ & $I$ & $A$ & $A$ & $A$ & $A$ & $A$ &
$A$ & $A$ & $A$ & $I$ & $I$ & $I$ & $I$ \\
15 & $I$ & $I$ & $I$ & $I$ & $A$ & $A$ & $A$ & $A$ & $A$ & $A$ & $A$
& $A$ & $I$ & $I$ & $I$ & $I$\\
\hline
\end{tabular}\vskip5pt
\caption{Gyration table for $K_{16}$. Here, $A$ is given by
(\ref{eqn: gyroautomorphism of K16}) and $I$ stands for the identity
transformation, \cite{AU2002}}\label{Tab: Gyration of K16}
\end{table}

The importance of L-subgyrogroups lies in the following results.
\begin{proposition}\label{Prop: Equivalence class for L-subgyrogroup}
If $H\leqslant_L G$, then $[a] = a\oplus H$ for all $a\in G$.
\end{proposition}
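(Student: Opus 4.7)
\medskip

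\noindent\textbf{Proof proposal.} By Proposition \ref{prop: equivalence class in left coset} we already have $[a]\subseteq a\oplus H$, so the whole task is to establish the reverse inclusion $a\oplus H\subseteq [a]$. The plan is to take an arbitrary $x=a\oplus h$ with $h\in H$ and verify the two defining conditions of $\sim_H$ in (\ref{relation sim_H}): namely $\ominus a\oplus x\in H$ and $\gyr{\ominus a, x}{(H)}=H$.

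The first condition is immediate from the left cancellation law of Theorem \ref{thm: cancellation law in gyrogroup}: $\ominus a\oplus x=\ominus a\oplus(a\oplus h)=h\in H$. The crux of the argument is the gyroautomorphism condition, and the key idea is to rewrite $\gyr{\ominus a, a\oplus h}{}$ as a gyration whose second slot is an element of $H$, so that the L-subgyrogroup hypothesis of Definition \ref{def: L-subgyrogroup} can be invoked. Applying the left loop property (g4a) gives
\[
\gyr{\ominus a,\, a\oplus h}{}=\gyr{\ominus a\oplus(a\oplus h),\, a\oplus h}{}=\gyr{h,\, a\oplus h}{},
\]
and then the right loop property (g4b), read in reverse, collapses this to $\gyr{h,a}{}$. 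Thus $\gyr{\ominus a, x}{}=\gyr{h,a}{}$.

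It remains to argue $\gyr{h,a}{(H)}=H$. Since $h\in H$ and $a\in G$, the inversive-symmetric identity of Theorem \ref{Thm: Gyroautomorphism are even and inversive symmetric} gives $\gyr{h,a}{}=\igyr{a,h}{}$. By the L-subgyrogroup hypothesis, $\gyr{a,h}{(H)}=H$, so $\gyr{a,h}{}$ restricts to a bijection of $H$; its inverse $\gyr{h,a}{}$ then also satisfies $\gyr{h,a}{(H)}\subseteq H$, and Proposition \ref{prop: gyr(X) subset X all a, b implies equality} upgrades this to equality. Hence $a\sim_H x$, showing $x\in[a]$.

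The main obstacle I anticipate is precisely the reduction of $\gyr{\ominus a, a\oplus h}{}$ to a gyration with an $H$-entry; once the loop properties are combined in the right order this is essentially a two-line computation, but choosing the correct order (left loop first, then right loop in reverse) is the only non-routine step. Everything else is a direct application of cancellation, inversive symmetry, and the invariance upgrade of Proposition \ref{prop: gyr(X) subset X all a, b implies equality}.
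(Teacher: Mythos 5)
Your proof is correct and follows essentially the same route as the paper's: left cancellation for $\ominus a\oplus x\in H$, the left and right loop properties to reduce $\gyr{\ominus a,\, a\oplus h}{}$ to $\gyr{h,a}{}=\igyr{a,h}{}$, and the L-subgyrogroup hypothesis to conclude. One small remark: the appeal to Proposition \ref{prop: gyr(X) subset X all a, b implies equality} at the end is both unnecessary and, strictly speaking, inapplicable (its hypothesis requires the inclusion for \emph{all} pairs $a,b$), but no harm is done since $\gyr{a,h}{(H)}=H$ already forces $\igyr{a,h}{(H)}=H$ directly, as the inverse of a bijection of $H$ onto itself.
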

\begin{proof}
Assume that $H\leqslant_L G$. By Proposition \ref{prop: equivalence
class in left coset}, $[a]\subseteq a\oplus H$. If $x = a\oplus h$
for some $h\in H$, then $\ominus a \oplus x = h$ is in $H$. The left
and right loop properties together imply $\gyr{\ominus a, x}{} =
\gyr{h, a}{} = \igyr{a, h}{}$. By assumption, $\gyr{a, h}(H) = H$,
which implies $\gyr{\ominus a, x}{(H)} = \igyr{a, h}{(H)} = H$.
Hence, $a\sim_H x$ and so $x\in [a]$. This establishes the reverse
inclusion.
\end{proof}

\begin{theorem}\label{Thm: Left cosets from a partition of G}
If $H$ is an L-subgyrogroup of a gyrogroup $G$, then the set
$$\cset{a\oplus H}{a\in G}$$ forms a disjoint partition of $G$.
\end{theorem}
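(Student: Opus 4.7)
The plan is to obtain this theorem as an immediate corollary of the two preceding results. Theorem \ref{thm: Sim_H is an equivalence relation} guarantees that $\sim_H$ is an equivalence relation on $G$, so the family of equivalence classes $\set{[a]\colon a\in G}$ is automatically a disjoint partition of $G$: reflexivity gives $a\in[a]$, so $G = \bigcup_{a\in G}[a]$, while the standard argument from symmetry and transitivity forces $[a]=[b]$ or $[a]\cap[b] = \emptyset$ for any $a,b\in G$.

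Once this equivalence-class partition is in hand, I would invoke Proposition \ref{Prop: Equivalence class for L-subgyrogroup}, which uses precisely the L-subgyrogroup hypothesis $\gyr{a,h}{(H)} = H$ for all $a\in G$ and $h\in H$ to upgrade the inclusion $[a]\subseteq a\oplus H$ of Proposition \ref{prop: equivalence class in left coset} into the equality $[a] = a\oplus H$. Substituting this equality term by term converts the partition $\set{[a]\colon a\in G}$ into $\cset{a\oplus H}{a\in G}$, and the covering and pairwise-disjointness properties are transported along with the identification. Nonemptiness of each coset is automatic, since $a = a\oplus 0\in a\oplus H$.

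There is essentially no obstacle at this point: every substantive step has already been carried out in the previous two statements. What is worth flagging is where the L-subgyrogroup hypothesis enters. Without it, the inclusion $[a]\subseteq a\oplus H$ of Proposition \ref{prop: equivalence class in left coset} can be strict, because an arbitrary element $a\oplus h$ of a left coset need not satisfy the second clause $\gyr{\ominus a, a\oplus h}{(H)} = H$ of the relation $\sim_H$; the L-condition $\gyr{a,h}{(H)} = H$ is exactly what lets the left and right loop properties collapse $\gyr{\ominus a,a\oplus h}{}$ down to $\igyr{a,h}{}$ and thereby force the missing equality. Thus the role of my proof is simply to assemble the two preceding results, record that equivalence classes partition, and translate the partition across the identification $[a] = a\oplus H$.
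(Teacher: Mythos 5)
Your proposal is correct and follows exactly the paper's argument: the paper also obtains this theorem directly from Theorem \ref{thm: Sim_H is an equivalence relation} and Proposition \ref{Prop: Equivalence class for L-subgyrogroup}, identifying each equivalence class $[a]$ with the left coset $a\oplus H$. Your additional remarks on where the L-subgyrogroup hypothesis enters are accurate but not needed for the proof itself.
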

\begin{proof}
This follows directly from Theorem \ref{thm: Sim_H is an equivalence
relation} and Proposition \ref{Prop: Equivalence class for
L-subgyrogroup}.
\end{proof}

\par In light of Theorem \ref{Thm: Left cosets from a partition of G}, we
derive the following version of Lagrange's theorem for
L-subgyrogroups.

\begin{theorem}[Lagrange's Theorem for L-Subgyrogroups]\label{thm: LT for L-subgyrogroup}
In a finite gyrogroup $G$, if $H\leqslant_L G$, then $\abs{H}$
divides $\abs{G}$.
\end{theorem}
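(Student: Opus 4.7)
The plan is to mimic the classical group-theoretic proof of Lagrange's Theorem, using the partition result already established in Theorem \ref{Thm: Left cosets from a partition of G} as the backbone. The two ingredients I need are: (i) every left coset $a\oplus H$ has the same cardinality as $H$, and (ii) distinct cosets are disjoint and exhaust $G$. Ingredient (ii) is exactly Theorem \ref{Thm: Left cosets from a partition of G}, so the real work lies in (i).

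First I would fix $a\in G$ and consider the map $f_a\colon H\to a\oplus H$ defined by $f_a(h)=a\oplus h$. Surjectivity is immediate from the definition of the left coset $a\oplus H = \{a\oplus h : h\in H\}$. For injectivity, if $a\oplus h_1 = a\oplus h_2$, then the general left cancellation law (Theorem \ref{thm: cancellation law in gyrogroup}(1)) gives $h_1=h_2$. Hence $f_a$ is a bijection and $\abs{a\oplus H}=\abs{H}$ for every $a\in G$.

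Next, since $H\leqslant_L G$, Theorem \ref{Thm: Left cosets from a partition of G} guarantees that the family $\{a\oplus H : a\in G\}$ is a disjoint partition of $G$. Because $G$ is finite, this family is finite; let $a_1\oplus H,\ldots,a_n\oplus H$ be its distinct members. Then
\begin{equation*}
\abs{G} \;=\; \sum_{i=1}^{n}\abs{a_i\oplus H} \;=\; \sum_{i=1}^{n}\abs{H} \;=\; n\abs{H},
\end{equation*}
which shows $\abs{H}$ divides $\abs{G}$.

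I do not anticipate any serious obstacle: both the partition theorem and the cancellation law are already in place, and the coset map $f_a$ behaves exactly as in the group case. The only subtle point worth flagging is that we do \emph{not} need any further gyroautomorphism hypothesis here, since the L-subgyrogroup assumption has already been fully used to secure the partition in Theorem \ref{Thm: Left cosets from a partition of G}; the present argument is then a pure counting step.
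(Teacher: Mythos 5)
Your proposal is correct and follows essentially the same route as the paper: both rely on Theorem \ref{Thm: Left cosets from a partition of G} to partition $G$ into finitely many left cosets and then count, concluding $\abs{G}=n\abs{H}$. The only difference is that you explicitly justify $\abs{a\oplus H}=\abs{H}$ via the bijection $h\mapsto a\oplus h$ and the general left cancellation law, a step the paper states without proof.
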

\begin{proof}
Being a finite gyrogroup, $G$ has a finite number of left cosets,
namely $a_1\oplus H$, $a_2\oplus H$, $\dots$, $a_n\oplus H$. Since
$\abs{a_i\oplus H} =\abs{H}$ for $i = 1, 2, \ldots, n$, it follows
that $\abs{G} = \Big|\lcup{i=1}{n}a_i\oplus H\Big| =
\lsum{i=1}{n}\abs{a_i\oplus H} = n\abs{H}$, which completes the
proof.
\end{proof}

\par Let us denote by $[G\colon H]$ the number of left cosets of $H$ in $G$.

\begin{corollary}\label{cor: |G| = [G:H]|H|}
In a finite gyrogroup $G$, if $H\leqslant_L G$, then $\abs{G} =
[G\colon H]\abs{H}$.
\end{corollary}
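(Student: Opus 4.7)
The plan is to essentially repeat the accounting argument already carried out in the proof of Theorem \ref{thm: LT for L-subgyrogroup}, but this time keeping track of the number of cosets as a named quantity rather than absorbing it into a divisibility statement. Since $H \leqslant_L G$, Theorem \ref{Thm: Left cosets from a partition of G} guarantees that the left cosets $\cset{a\oplus H}{a\in G}$ form a disjoint partition of $G$. As $G$ is finite, only finitely many distinct cosets appear; by the definition of $[G\colon H]$, we can enumerate them as $a_1\oplus H, a_2\oplus H, \ldots, a_n\oplus H$ with $n = [G\colon H]$.

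Next I would observe that each coset has cardinality $\abs{H}$. This is because the left gyrotranslation $L_{a_i}\colon x\mapsto a_i\oplus x$ is a permutation of $G$ (Theorem \ref{thm: induced gyrogroup}(\ref{item: left translation is a permutation})), so its restriction to $H$ is a bijection from $H$ onto $a_i\oplus H$. Hence $\abs{a_i\oplus H} = \abs{H}$ for every $i$.

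Finally, since the cosets partition $G$ disjointly, I would conclude
\[
\abs{G} = \Big|\lcup{i=1}{n}(a_i\oplus H)\Big| = \lsum{i=1}{n}\abs{a_i\oplus H} = n\abs{H} = [G\colon H]\abs{H}.
\]
There is no real obstacle here: all the substantive work has already been done in Theorem \ref{thm: Sim_H is an equivalence relation}, Proposition \ref{Prop: Equivalence class for L-subgyrogroup}, and Theorem \ref{Thm: Left cosets from a partition of G}. The corollary is little more than a restatement of the computation inside the proof of Theorem \ref{thm: LT for L-subgyrogroup} with the number of cosets relabeled as the index $[G\colon H]$.
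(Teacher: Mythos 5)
Your proposal is correct and follows essentially the same route as the paper: the corollary is obtained by rereading the counting argument in the proof of Theorem \ref{thm: LT for L-subgyrogroup} (which rests on Theorem \ref{Thm: Left cosets from a partition of G}) and identifying the number $n$ of cosets with the index $[G\colon H]$. Your explicit justification that $\abs{a_i\oplus H}=\abs{H}$ via the injectivity of the left gyrotranslation $L_{a_i}$ is a welcome, if minor, elaboration of a step the paper leaves implicit.
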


\par For a \textit{non}-L-subgyrogroup $K$ of $G$, it is no longer true
that the left cosets of $K$ partition $G$. Moreover, the formula
$\abs{G} = [G\colon K]\abs{K}$ is not true, in general.

\section{Isomorphism Theorems}\label{Sec: Isomorphism}
\par A map $\vphi\colon G\to H$ between gyrogroups is
called a \textit{gyrogroup homomorphism} if $\vphi(a\oplus b) =
\vphi(a)\oplus\vphi(b)$ for all $a, b\in G$. A bijective gyrogroup
homomorphism is called a \textit{gyrogroup isomorphism}. We say that
$G$ and $H$ are \textit{isomorphic gyrogroups}, written $G\cong H$,
if there exists a gyrogroup isomorphism from $G$ to $H$. The next
proposition lists basic properties of gyrogroup homomorphisms.
\begin{proposition}\label{prop: property of homomorphism}
Let $\vphi\colon G\to H$ be a homomorphism of gyrogroups.
\begin{enumerate}
    \item\label{item: preserving identity} {$\vphi(0) = 0$.}
    \item\label{item: preserving inverse} {$\vphi(\ominus a) = \ominus \vphi(a)$ for all $a\in
    G$.}
    \item\label{item: preserving gyration} {$\vphi(\gyr{a, b}{c}) = \gyr{\vphi(a),\vphi(b)}{\vphi(c)}$ for all $a, b, c\in
    G$.}
    \item\label{item: preserving coaddition} {$\vphi(a\boxplus b) = \vphi(a)\boxplus\vphi(b)$ for all $a, b\in
    G$.}
\end{enumerate}
\end{proposition}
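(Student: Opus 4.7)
The plan is to prove the four items in order, each leveraging its predecessors, with the main tools being the homomorphism property, the general left cancellation law from Theorem \ref{thm: cancellation law in gyrogroup}, the uniqueness of solutions from Theorem \ref{Thm: Linear equations in gyrogroup}, and the \emph{gyrator identity} appearing in Theorem \ref{Thm: Alternative def. of gyrogroup}. The whole proposition is essentially a routine exercise once one sees that the gyrator identity is what lets us transport a statement about $\gyr{a,b}{}$ back to a statement purely about $\oplus$, which is all the homomorphism is assumed to respect.

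For (\ref{item: preserving identity}), I would apply $\vphi$ to $0 = 0 \oplus 0$, obtaining $\vphi(0) = \vphi(0) \oplus \vphi(0)$, and then compare with $\vphi(0) = \vphi(0)\oplus 0$ (using the two-sided identity in $H$) and invoke the general left cancellation law to conclude $\vphi(0) = 0$. For (\ref{item: preserving inverse}), I would apply $\vphi$ to $\ominus a \oplus a = 0$ and use (\ref{item: preserving identity}) to get $\vphi(\ominus a) \oplus \vphi(a) = 0$; since the equation $x \oplus \vphi(a) = 0$ has the unique solution $x = \ominus \vphi(a)$ in $H$ (by Theorem \ref{Thm: Linear equations in gyrogroup}, or by uniqueness of two-sided inverses), the conclusion follows.

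For (\ref{item: preserving gyration}), the natural move is to start from the gyrator identity
\[
\gyr{a, b}{c} = \ominus(a\oplus b) \oplus \bigl(a \oplus (b\oplus c)\bigr),
\]
apply $\vphi$ to both sides, push $\vphi$ through every $\oplus$ using the homomorphism property, and use (\ref{item: preserving inverse}) to pull the inversion outside. What results is exactly the gyrator identity applied to $\vphi(a), \vphi(b), \vphi(c)$ in $H$, yielding $\vphi(\gyr{a,b}{c}) = \gyr{\vphi(a),\vphi(b)}{\vphi(c)}$.

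For (\ref{item: preserving coaddition}), I would unfold the definition (\ref{Eqn: Cooperation}) of the cooperation, $a \boxplus b = a \oplus \gyr{a, \ominus b}{b}$, apply $\vphi$, and then chain (\ref{item: preserving gyration}) with (\ref{item: preserving inverse}) to rewrite the image as $\vphi(a) \oplus \gyr{\vphi(a), \ominus \vphi(b)}{\vphi(b)} = \vphi(a) \boxplus \vphi(b)$. No step is genuinely difficult here; the only thing to be careful about is the ordering, since each item is needed for the next, and in (\ref{item: preserving gyration}) one must resist the temptation to invoke the left gyroassociative law directly and instead work symbolically through the gyrator identity, which is the only gyration formula stated purely in terms of $\oplus$ and $\ominus$.
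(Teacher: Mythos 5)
Your proof is correct and complete: each of the four items follows from its predecessors exactly as you describe, with the gyrator identity doing the work in item (\ref{item: preserving gyration}) and the definition (\ref{Eqn: Cooperation}) reducing item (\ref{item: preserving coaddition}) to the earlier parts. The paper states this proposition without any proof at all, so there is nothing to compare against; your argument is the standard one the authors evidently had in mind, and the only point worth making explicit is the one you already flag in item (\ref{item: preserving inverse}), namely that the unique solution of $x\oplus\vphi(a)=0$ in $H$ is indeed $\ominus\vphi(a)$ (e.g.\ because $\gyr{0,c}{}$ is the identity, so $0\boxplus(\ominus\vphi(a))=\ominus\vphi(a)$).
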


\par The proof of the following two propositions is routine,
using the subgyrogroup criterion and the definition of an
L-subgyrogroup.
\begin{proposition}\label{prop: image of subgyrogroup is a subgyrogroup}
Let $\vphi\colon G\to H$ be a gyrogroup homomorphism. If $K\leqslant
G$, then $\vphi(K)\leqslant H$. If $K\leqslant_L G$ and if $\vphi$
is surjective, then $\vphi(K)\leqslant_L H$.
\end{proposition}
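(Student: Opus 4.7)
The plan is to verify both assertions by direct application of the Subgyrogroup Criterion (Proposition \ref{prop: criterion for subgyrogroup}), the definition of an L-subgyrogroup, and the elementary identities for gyrogroup homomorphisms recorded in Proposition \ref{prop: property of homomorphism}.

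For the first assertion, I would take arbitrary elements $\vphi(k_1), \vphi(k_2) \in \vphi(K)$ with $k_1, k_2 \in K$ and check the two closure hypotheses of Proposition \ref{prop: criterion for subgyrogroup}. Item (\ref{item: preserving inverse}) of Proposition \ref{prop: property of homomorphism} gives $\ominus\vphi(k_1) = \vphi(\ominus k_1)$, and the homomorphism property gives $\vphi(k_1) \oplus \vphi(k_2) = \vphi(k_1 \oplus k_2)$. Both preimages $\ominus k_1$ and $k_1 \oplus k_2$ lie in $K$ because $K$ itself passes the Subgyrogroup Criterion, so their images lie in $\vphi(K)$. Nonemptiness follows from item (\ref{item: preserving identity}), since $0 = \vphi(0) \in \vphi(K)$.

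For the L-subgyrogroup claim, assume $K \leqslant_L G$ and that $\vphi$ is surjective. The first part already gives $\vphi(K) \leqslant H$, so it remains to check $\gyr{a, h}(\vphi(K)) = \vphi(K)$ for every $a \in H$ and $h \in \vphi(K)$. Surjectivity yields some $g \in G$ with $\vphi(g) = a$, while $h = \vphi(k)$ for some $k \in K$ by definition of $\vphi(K)$. Pushing gyrations through $\vphi$ via item (\ref{item: preserving gyration}) yields the identity $\gyr{a, h}(\vphi(K)) = \vphi(\gyr{g, k}(K))$, and since $K \leqslant_L G$ the inner set equals $K$, so the whole expression equals $\vphi(K)$.

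The argument is essentially bookkeeping; the only conceptual point, and the reason surjectivity cannot be dropped from the L-subgyrogroup claim, is that one needs every $a \in H$ to come from some $g \in G$ in order to invoke the defining property $\gyr{g, k}(K) = K$. For $a \in H \setminus \vphi(G)$ the gyration $\gyr{a, h}$ on $H$ is not linked to any gyration on $G$ through $\vphi$, and closure of $\vphi(K)$ under such outside gyrations is a genuinely stronger condition that the hypotheses do not provide.
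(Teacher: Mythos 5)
Your proof is correct and follows exactly the route the paper indicates: the paper omits the details, stating only that the argument is routine via the Subgyrogroup Criterion (Proposition \ref{prop: criterion for subgyrogroup}) and the definition of an L-subgyrogroup, which is precisely what you carry out, together with the correct observation about where surjectivity is needed.
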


\begin{proposition}\label{prop: inverse image is a subgyrogroup}
Let $\vphi\colon G\to H$ be a gyrogroup homomorphism. If $K\leqslant
H$, then $\vphi^{-1}(K)\leqslant G$. If $K\leqslant_L H$, then
$\vphi^{-1}(K)\leqslant_L G$.
\end{proposition}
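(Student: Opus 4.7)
The plan is to verify each conclusion by pulling the defining closure condition on $\vphi^{-1}(K)$ back to the corresponding condition on $K$, using the transport properties of homomorphisms collected in Proposition \ref{prop: property of homomorphism}.

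For the first assertion, I would apply the Subgyrogroup Criterion (Proposition \ref{prop: criterion for subgyrogroup}). Nonemptiness of $\vphi^{-1}(K)$ is immediate from $\vphi(0) = 0 \in K$. Closure under $\ominus$ follows from $\vphi(\ominus a) = \ominus\vphi(a) \in K$, and closure under $\oplus$ follows from $\vphi(a\oplus b) = \vphi(a)\oplus\vphi(b) \in K$; each reduces directly to the subgyrogroup closure of $K$.

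For the second assertion, I need to establish $\gyr{a,h}{(\vphi^{-1}(K))} = \vphi^{-1}(K)$ for every $a\in G$ and $h\in\vphi^{-1}(K)$. The forward inclusion is routine: for $x\in\vphi^{-1}(K)$, Proposition \ref{prop: property of homomorphism}(\ref{item: preserving gyration}) gives $\vphi(\gyr{a,h}{x}) = \gyr{\vphi(a),\vphi(h)}{\vphi(x)}$, and since $\vphi(h)\in K$ and $K\leqslant_L H$, this value lies in $\gyr{\vphi(a),\vphi(h)}{(K)} = K$, so $\gyr{a,h}{x}\in\vphi^{-1}(K)$.

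The reverse inclusion is the only mildly subtle point and is what I expect to be the main (if modest) obstacle, since the L-subgyrogroup axiom only constrains gyrations whose \emph{second} argument lies in $K$, whereas inverting $\gyr{a,h}{}$ produces $\gyr{h,a}{}$ in which the $K$-element has been shuffled into the \emph{first} slot. I would resolve this using the inversive symmetry $\igyr{a,h}{} = \gyr{h,a}{}$ from Theorem \ref{Thm: Gyroautomorphism are even and inversive symmetric}: applying the inverse of $\gyr{\vphi(a),\vphi(h)}{}$ to both sides of $\gyr{\vphi(a),\vphi(h)}{(K)} = K$ forces $\gyr{\vphi(h),\vphi(a)}{(K)} = K$, and then the same homomorphism computation as above yields $\gyr{h,a}{(\vphi^{-1}(K))}\subseteq\vphi^{-1}(K)$. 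Since $\gyr{h,a}{}$ is the set-theoretic inverse of $\gyr{a,h}{}$ on $G$, this gives the reverse inclusion and completes the verification of the L-subgyrogroup axiom for $\vphi^{-1}(K)$.
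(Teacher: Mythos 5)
Your proof is correct and is precisely the ``routine'' argument the paper has in mind (the paper omits the proof, stating only that it follows from the subgyrogroup criterion and the definition of an L-subgyrogroup): the first part pulls the closure conditions back through $\vphi$, and the second part correctly handles the reverse inclusion via the inversive symmetry $\igyr{a,h}{}=\gyr{h,a}{}$ of Theorem \ref{Thm: Gyroautomorphism are even and inversive symmetric}. No gaps.
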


\par Let $\vphi\colon G\to H$ be a gyrogroup homomorphism.
The kernel of $\vphi$ is defined to be the inverse image of the
trivial subgyrogroup $\set{0}$ under $\vphi$, hence is a
subgyro-group. The kernel of $\vphi$ is invariant under the
gyroautomorphisms of $G$, that is, $\gyr{a,
b}{(\ker{\vphi})}\subseteq \ker{\vphi}$ for all $a, b\in G$. By
Proposition \ref{prop: gyr(X) subset X all a, b implies equality},
$\gyr{a, b}{(\ker{\vphi})} =  \ker{\vphi}$ for all $a, b\in G$ and
so $\ker{\vphi}$ is an L-subgyrogroup of $G$. From this the relation
(\ref{relation sim_H}) becomes
\begin{equation}\label{relation sim_kernel}
a\sim_{\ker{\vphi}} b \hskip0.3cm\textrm{if and only if}\hskip0.3cm
\ominus a\oplus b \in \ker{\vphi}
\end{equation}
for all $a, b\in G$. More precisely, we have the following
proposition.

\begin{proposition}\label{prop: equivalence of sim_kernel}
Let $\vphi\colon G\to H$ be a gyrogroup homomorphism. For all $a,
b\in G$, the following are equivalent:
\begin{enumerate}
    \item\label{item: sim_kernel} {$a\sim_{\ker{\vphi}} b$}
    \item\label{item: -a+b} {$\ominus a\oplus b \in \ker{\vphi}$}
    \item\label{item: phi(a) = phi(b)} {$\vphi(a)=\vphi(b)$}
    \item\label{item: coset equal} {$a\oplus \ker{\vphi} = b\oplus \ker{\vphi}$.}
\end{enumerate}
\end{proposition}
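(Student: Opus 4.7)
The plan is to make (2) the hub and prove $(1)\Leftrightarrow(2)$, $(2)\Leftrightarrow(3)$, and $(1)\Leftrightarrow(4)$ separately, each reducing to a result already available in the excerpt.

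For $(1)\Leftrightarrow(2)$, I would simply cite the discussion immediately preceding the proposition: since $\ker{\vphi}$ is invariant under every gyroautomorphism of $G$ (by applying Proposition \ref{prop: gyr(X) subset X all a, b implies equality} to the inclusion $\gyr{a,b}{(\ker{\vphi})}\subseteq\ker{\vphi}$, itself a direct consequence of Proposition \ref{prop: property of homomorphism}(\ref{item: preserving gyration})), the second conjunct $\gyr{\ominus a,b}{(\ker{\vphi})}=\ker{\vphi}$ in the definition of $\sim_H$ is automatic. So relation (\ref{relation sim_H}) collapses to the form displayed in (\ref{relation sim_kernel}), giving the equivalence of (1) and (2).

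For $(2)\Leftrightarrow(3)$, I would apply $\vphi$ to $\ominus a\oplus b$ and use parts (\ref{item: preserving identity}) and (\ref{item: preserving inverse}) of Proposition \ref{prop: property of homomorphism} together with the homomorphism property to obtain $\vphi(\ominus a\oplus b)=\ominus\vphi(a)\oplus\vphi(b)$. Then $\ominus a\oplus b\in\ker{\vphi}$ iff $\ominus\vphi(a)\oplus\vphi(b)=0$, and by Theorem \ref{Thm: Linear equations in gyrogroup} applied in $H$ (or equivalently by the left cancellation law of Theorem \ref{thm: cancellation law in gyrogroup}), this last equation holds iff $\vphi(a)=\vphi(b)$.

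For $(1)\Leftrightarrow(4)$, I would invoke the L-subgyrogroup status of $\ker{\vphi}$ already established above, which lets me apply Proposition \ref{Prop: Equivalence class for L-subgyrogroup} to conclude $[a]=a\oplus\ker{\vphi}$ and $[b]=b\oplus\ker{\vphi}$. By Theorem \ref{thm: Sim_H is an equivalence relation}, $\sim_{\ker{\vphi}}$ is an equivalence relation, so $a\sim_{\ker{\vphi}}b$ iff $[a]=[b]$ iff $a\oplus\ker{\vphi}=b\oplus\ker{\vphi}$.

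There is no real obstacle here: the proposition is essentially a bookkeeping corollary that packages the L-subgyrogroup machinery for kernels. The only point that requires care is ensuring that the gyroautomorphism invariance of $\ker{\vphi}$ is justified before using it to strip the second conjunct from the definition of $\sim_{\ker{\vphi}}$; once that is recorded, the three equivalences are one-line arguments each.
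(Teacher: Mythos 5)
Your proposal is correct and follows exactly the route the paper intends: the paper omits an explicit proof, but the discussion immediately preceding the proposition (establishing that $\ker{\vphi}$ is gyroautomorphism-invariant, hence an L-subgyrogroup, so that $\sim_{\ker{\vphi}}$ collapses to the single condition $\ominus a\oplus b\in\ker{\vphi}$) is precisely your first step, and the remaining equivalences via $\vphi(\ominus a\oplus b)=\ominus\vphi(a)\oplus\vphi(b)$ and Proposition \ref{Prop: Equivalence class for L-subgyrogroup} are the routine completions the authors leave to the reader.
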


\par In view of Proposition \ref{prop: equivalence of sim_kernel},
we define a binary operation on the set $G/\ker{\vphi}$ of left
cosets of $\ker{\vphi}$ in the following natural way:
\begin{equation}\label{eqn: quotient gyrogroup operation}
(a\oplus\ker{\vphi})\oplus (b\oplus \ker{\vphi}) = (a\oplus b)\oplus
\ker{\vphi}, \hskip0.5cm a, b\in G.
\end{equation}
The resulting system forms a gyrogroup, called a \textit{quotient
gyrogroup}.

\begin{theorem}\label{thm: G/kernel is a gyrogroup}
If $\vphi\colon G\to H$ is a gyrogroup homomorphism, then
$G/\ker{\vphi}$ with operation defined by (\ref{eqn: quotient
gyrogroup operation}) is a gyrogroup.
\end{theorem}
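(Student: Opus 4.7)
The plan is to verify the gyrogroup axioms in the form given by Theorem \ref{Thm: Alternative def. of gyrogroup}, treating a coset $a\oplus\ker{\vphi}$ as represented by the element $a$. The very first thing to check is that the operation (\ref{eqn: quotient gyrogroup operation}) is well-defined. Suppose $a\oplus\ker{\vphi}=a'\oplus\ker{\vphi}$ and $b\oplus\ker{\vphi}=b'\oplus\ker{\vphi}$. By the equivalence (\ref{item: coset equal})$\Leftrightarrow$(\ref{item: phi(a) = phi(b)}) of Proposition \ref{prop: equivalence of sim_kernel}, this means $\vphi(a)=\vphi(a')$ and $\vphi(b)=\vphi(b')$. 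Since $\vphi$ is a homomorphism, $\vphi(a\oplus b)=\vphi(a)\oplus\vphi(b)=\vphi(a')\oplus\vphi(b')=\vphi(a'\oplus b')$, and invoking the same equivalence in reverse yields $(a\oplus b)\oplus\ker{\vphi}=(a'\oplus b')\oplus\ker{\vphi}$.

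Axioms (G1) and (G2) follow by direct calculation with representatives: the coset $0\oplus\ker{\vphi}=\ker{\vphi}$ is a left identity because $(0\oplus\ker{\vphi})\oplus(a\oplus\ker{\vphi})=(0\oplus a)\oplus\ker{\vphi}=a\oplus\ker{\vphi}$, and $\ominus a\oplus\ker{\vphi}$ is a left inverse for $a\oplus\ker{\vphi}$. For the gyroautomorphism axiom (G3), I propose the natural definition
\begin{equation*}
\mathrm{gyr}[a\oplus\ker{\vphi},b\oplus\ker{\vphi}](c\oplus\ker{\vphi})=\gyr{a,b}{c}\oplus\ker{\vphi}.
\end{equation*}
Its well-definedness is proved exactly as for $\oplus$, this time using Proposition \ref{prop: property of homomorphism}(\ref{item: preserving gyration}) to commute $\vphi$ past the gyration: from $\vphi(a)=\vphi(a')$, $\vphi(b)=\vphi(b')$, $\vphi(c)=\vphi(c')$ we obtain $\vphi(\gyr{a,b}{c})=\gyr{\vphi(a),\vphi(b)}{\vphi(c)}=\vphi(\gyr{a',b'}{c'})$, and then Proposition \ref{prop: equivalence of sim_kernel} again gives equality of the resulting cosets. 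That this map is an automorphism of $G/\ker{\vphi}$ follows by pushing the corresponding identity for $\gyr{a,b}{}$ in $G$ through representatives, and invertibility comes from $\gyr{b,a}{}$ by Theorem \ref{Thm: Gyroautomorphism are even and inversive symmetric}.

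Finally, the left gyroassociative law (g3a) and the left loop property (g4a) for the quotient are immediate consequences of the corresponding laws in $G$ applied to representatives. For instance,
\begin{equation*}
(a\oplus\ker{\vphi})\oplus\bigl((b\oplus\ker{\vphi})\oplus(c\oplus\ker{\vphi})\bigr)=\bigl(a\oplus(b\oplus c)\bigr)\oplus\ker{\vphi}=\bigl((a\oplus b)\oplus\gyr{a,b}{c}\bigr)\oplus\ker{\vphi},
\end{equation*}
which matches the right-hand side of (g3a) computed from the definitions of $\oplus$ and of the quotient gyration; (g4a) is obtained the same way from $\gyr{a,b}{}=\gyr{a\oplus b,b}{}$.

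The only genuine obstacle is the well-definedness of the binary operation and of the quotient gyrations; once Proposition \ref{prop: equivalence of sim_kernel} is in hand this reduces to a purely formal calculation, and all remaining gyrogroup axioms transfer from $G$ by simply computing modulo $\ker{\vphi}$.
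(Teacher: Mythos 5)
Your proposal is correct and follows essentially the same route as the paper: the paper's (much terser) proof likewise takes $0\oplus\ker\vphi$ as the left identity, $(\ominus a)\oplus\ker\vphi$ as the left inverse, and defines the quotient gyroautomorphisms by $\gyr{a\oplus\ker\vphi, b\oplus\ker\vphi}{(c\oplus\ker\vphi)} = \gyr{a,b}{c}\oplus\ker\vphi$, with well-definedness delegated to Proposition \ref{prop: equivalence of sim_kernel} exactly as you do. You have simply written out in full the well-definedness and axiom-transfer checks that the paper leaves implicit.
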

\begin{proof}
Set $K = \ker{\vphi}$. The coset $0\oplus K$ is a left identity of
$G/K$. The coset $(\ominus a)\oplus K$ is a left inverse of $a\oplus
K$. For $X = a\oplus K, Y = b\oplus K\in G/K$, the gyroautomorphism
generated by $X$ and $Y$ is given by
$$\gyr{X, Y}{(c\oplus K)} = (\gyr{a, b}{c})\oplus K$$
for $c\oplus K\in G/K$.
\end{proof}

\par The map $\Pi\colon G\to G/\ker{\vphi}$ given by $\Pi(a) = a\oplus
\ker{\vphi}$ defines a surjective \mbox{gyrogroup} homomorphism,
which will be referred to as the \textit{canonical projection}. In
light of Theorem \ref{thm: G/kernel is a gyrogroup}, the first
isomorphism theorem for gyrogroups follows.

\begin{theorem}[The First Isomorphism Theorem]\label{thm: the 1st isomorphism
theorem} If $\vphi\colon G\to H$ is a gyrogroup \mbox{homomorphism},
then $G/\ker{\vphi}\cong \vphi(G)$ as gyrogroups.
\end{theorem}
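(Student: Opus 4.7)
The plan is to construct an explicit map $\widetilde{\vphi}\colon G/\ker{\vphi}\to\vphi(G)$ by setting
$$\widetilde{\vphi}(a\oplus\ker{\vphi}) = \vphi(a),$$
and to verify that it is a well-defined gyrogroup isomorphism. The argument parallels the classical group-theoretic proof, with each step underwritten by a result already established in the paper.

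First I would handle well-definedness and injectivity simultaneously, since both come from the same source. By Proposition \ref{prop: equivalence of sim_kernel}, the conditions $a\oplus \ker{\vphi} = b\oplus \ker{\vphi}$ and $\vphi(a) = \vphi(b)$ are equivalent. Hence the value $\widetilde{\vphi}(a\oplus\ker{\vphi})$ does not depend on the chosen representative $a$, and distinct cosets are sent to distinct elements of $\vphi(G)$.

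Next I would verify the homomorphism property. Writing $K = \ker{\vphi}$ and using the definition (\ref{eqn: quotient gyrogroup operation}) of the operation on $G/K$ together with the homomorphism property of $\vphi$, one obtains
$$\widetilde{\vphi}((a\oplus K)\oplus(b\oplus K)) = \widetilde{\vphi}((a\oplus b)\oplus K) = \vphi(a\oplus b) = \vphi(a)\oplus\vphi(b) = \widetilde{\vphi}(a\oplus K)\oplus\widetilde{\vphi}(b\oplus K).$$
Surjectivity onto $\vphi(G)$ is immediate, since every element of $\vphi(G)$ has the form $\vphi(a) = \widetilde{\vphi}(a\oplus K)$. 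Equivalently, one may note that $\widetilde{\vphi}$ is forced by the commuting triangle $\vphi = \widetilde{\vphi}\circ\Pi$, where $\Pi$ is the canonical projection.

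The main conceptual hurdle was already dispensed with before the statement: one has to know that $\ker{\vphi}$ is an L-subgyrogroup of $G$ so that Theorem \ref{thm: G/kernel is a gyrogroup} applies and $G/\ker{\vphi}$ is a genuine gyrogroup, and one needs Proposition \ref{prop: equivalence of sim_kernel} to identify cosets with fibers of $\vphi$. Both were carried out earlier in this section. Once these are in hand, the proof is essentially a diagram chase through Proposition \ref{prop: equivalence of sim_kernel}, and the remaining verifications are formal.
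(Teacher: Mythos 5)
Your proposal is correct and follows essentially the same route as the paper: define $a\oplus\ker{\vphi}\mapsto\vphi(a)$, invoke Proposition \ref{prop: equivalence of sim_kernel} for well-definedness and injectivity, and check the homomorphism property and surjectivity directly. The paper's proof is just a terser version of the same argument.
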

\begin{proof}
Set $K = \ker{\vphi}$. Define $\phi\colon G/K\to \vphi(G)$ by
$\phi(a\oplus K) = \vphi(a)$. By Proposition \ref{prop: equivalence
of sim_kernel}, $\phi$ is well defined and injective. A direct
computation shows that $\phi$ is a gyrogroup isomorphism from $G/K$
onto $\vphi(G)$.
\end{proof}

\par It is known that a subgroup of a group is normal
if and only if it is the kernel of some group homomorphism. This
characterization of a normal subgroup allows us to define a normal
subgyrogroup in a similar fashion, as follows. A subgyrogroup $N$ of
a gyrogroup $G$ is \textit{normal in $G$}, denoted by $N\unlhd G$,
if it is the kernel of a gyrogroup homomorphism of $G$.

\begin{lemma}\label{lem: A+B is a subgyrogroup}
Let $G$ be a gyrogroup. If $A\leqslant G$ and $B\unlhd G$, then
$$A\oplus B := \cset{a\oplus b}{a\in A, b\in B}$$
forms a subgyrogroup of $G$.
\end{lemma}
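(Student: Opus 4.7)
The plan is to exploit the normality hypothesis via the definition: since $B \unlhd G$, there exists a gyrogroup homomorphism $\vphi\colon G\to H'$ (for some gyrogroup $H'$) whose kernel is exactly $B$. The strategy is to recognize $A\oplus B$ as the preimage of a subgyrogroup under $\vphi$, at which point Propositions \ref{prop: image of subgyrogroup is a subgyrogroup} and \ref{prop: inverse image is a subgyrogroup} finish the job with almost no further work.

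Concretely, the key identity I would establish is
\[
A\oplus B \;=\; \vphi^{-1}(\vphi(A)).
\]
The forward inclusion is direct: for $a\in A$ and $b\in B$, $\vphi(a\oplus b) = \vphi(a)\oplus\vphi(b) = \vphi(a)\oplus 0 = \vphi(a)\in\vphi(A)$. For the reverse inclusion, take $x\in G$ with $\vphi(x) = \vphi(a)$ for some $a\in A$. By Proposition \ref{prop: equivalence of sim_kernel}, this equality forces $\ominus a\oplus x\in\ker{\vphi} = B$; calling this element $b$, the left cancellation law (Theorem \ref{thm: cancellation law in gyrogroup}) gives $x = a\oplus(\ominus a\oplus x) = a\oplus b \in A\oplus B$.

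Once the identity is in hand, I would apply Proposition \ref{prop: image of subgyrogroup is a subgyrogroup} to conclude that $\vphi(A)\leqslant H'$, and then Proposition \ref{prop: inverse image is a subgyrogroup} to conclude that $\vphi^{-1}(\vphi(A))\leqslant G$, which by the identity above means $A\oplus B\leqslant G$, as desired.

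The only step requiring care is the reverse inclusion in the set equality, and this is precisely where the normality of $B$ enters essentially: without the representation $B = \ker{\vphi}$ one cannot invoke Proposition \ref{prop: equivalence of sim_kernel} to convert the condition $\vphi(x)=\vphi(a)$ into the explicit statement $\ominus a\oplus x\in B$. A more hands-on alternative would try to verify the Subgyrogroup Criterion (Proposition \ref{prop: criterion for subgyrogroup}) for $A\oplus B$ directly, but closing $A\oplus B$ under $\ominus$ and $\oplus$ requires disentangling iterated gyrations via the gyrosum inversion law and the left gyroassociative law, which is considerably more opaque than the preimage argument sketched above.
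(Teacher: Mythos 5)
Your argument is correct, and it is genuinely different from---and shorter than---the paper's. The paper proves the lemma by verifying the Subgyrogroup Criterion (Proposition \ref{prop: criterion for subgyrogroup}) directly: it first establishes $B\oplus a = a\oplus B$ for all $a\in G$, then closes $A\oplus B$ under $\ominus$ and $\oplus$ by explicit manipulations with the left and right loop properties, in particular deriving that $(a\oplus b)\oplus(c\oplus d) = (a\oplus c)\oplus\gyr{a, c}{\tilde{b}}$ for some $\tilde{b}\in B$. You instead identify $A\oplus B$ with the preimage $\vphi^{-1}(\vphi(A))$, where $B=\ker{\vphi}$, and both inclusions check out: the forward one is the homomorphism property plus $\vphi(b)=0$, and the reverse one correctly uses the equivalence of items (\ref{item: -a+b}) and (\ref{item: phi(a) = phi(b)}) in Proposition \ref{prop: equivalence of sim_kernel} together with the left cancellation law, after which Propositions \ref{prop: image of subgyrogroup is a subgyrogroup} and \ref{prop: inverse image is a subgyrogroup} finish the proof. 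Your route is cleaner and more conceptual, and it makes transparent exactly where normality of $B$ is used. What the paper's hands-on computation buys is the explicit product formula $(a\oplus b)\oplus(c\oplus d) = (a\oplus c)\oplus\gyr{a, c}{\tilde{b}}$, which is quoted again in the proof of the Second Isomorphism Theorem (Theorem \ref{thm: the 2nd isomorphism theorem}) to show that the map $a\oplus b\mapsto a\oplus(A\cap B)$ is a homomorphism; if one adopts your proof of the lemma, that formula would still need to be derived at that later point.
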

\begin{proof}
By assumption, $B = \ker\phi$, where $\phi$ is a gyrogroup
homomorphism of $G$. Using Theorem \ref{Thm: Linear equations in
gyrogroup}, one can prove that $B\oplus a = a\oplus B$ for all $a\in
G$.
\par Let $x = a\oplus b$, with $a\in A$, $b\in B$.
Since $\phi(\gyr{a, b}{\ominus a}) = \gyr{\phi(a), 0}{\phi(\ominus
a)} = \phi(\ominus a)$, we have $\gyr{a, b}{\ominus a} = \ominus a
\oplus b_1$ for some $b_1\in B$. Set $b_2 = \gyr{a, b}{\ominus b}$.
Since $b_2\in B$ and $B\oplus (\ominus a) = (\ominus a)\oplus B$,
there is an element $b_3\in B$ for which $b_2\ominus a = \ominus
a\oplus b_3$. The left and right loop properties together imply
$\ominus x = \ominus a\oplus \Bp{b_3\oplus \gyr{b_3, \ominus
a}{(\gyr{b_2, \ominus a}{b_1})}}$, whence $\ominus x$ belongs to
$A\oplus B$.
\par For $x, y\in A\oplus B$, we have $x = a\oplus b$ and
$y = c\oplus d$ for some $a, c\in A$, $b, d\in B$. Since
$\phi(b\oplus\gyr{b, a}{(c\oplus d)}) =
\phi(b)\oplus\gyr{\phi(b),\phi(a)}{(\phi(c)\oplus\phi(d))} =
\phi(c)$, we have $b\oplus\gyr{b, a}{(c\oplus d)} = c\oplus b_1$ for
some $b_1\in B$. The left and right loop properties together imply $
x\oplus y = (a\oplus c)\oplus\gyr{a, c}{b_1}$, whence $x\oplus y$
belongs to $A\oplus B$. This proves $A\oplus B\leqslant G$.
\end{proof}

\begin{theorem}[The Second Isomorphism Theorem]\label{thm: the 2nd isomorphism theorem}
Let $G$ be a gyrogroup and let $A, B\leqslant G$. If $B\unlhd G$,
then $A\cap B\unlhd A$ and $(A\oplus B)/B\cong A/(A\cap B)$ as
gyrogroups.
\end{theorem}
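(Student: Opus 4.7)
The plan is to exploit the definition of normal subgyrogroup together with the First Isomorphism Theorem. Since $B\unlhd G$, I will write $B=\ker{\phi}$ for some gyrogroup homomorphism $\phi\colon G\to K$, and then work entirely with restrictions of $\phi$.

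First, consider $\psi:=\res{\phi}{A}\colon A\to K$. Being the restriction of a homomorphism to a subgyrogroup, $\psi$ is itself a gyrogroup homomorphism, and $\ker{\psi}=A\cap\ker{\phi}=A\cap B$. Thus $A\cap B$ is realized as the kernel of a gyrogroup homomorphism of $A$, which shows $A\cap B\unlhd A$. The First Isomorphism Theorem (Theorem \ref{thm: the 1st isomorphism theorem}) then yields $A/(A\cap B)\cong\psi(A)=\phi(A)$.

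Next, Lemma \ref{lem: A+B is a subgyrogroup} guarantees $A\oplus B\leqslant G$, so consider $\phi_0:=\res{\phi}{A\oplus B}\colon A\oplus B\to K$, again a gyrogroup homomorphism. Since $0\in A$, one has $B=0\oplus B\subseteq A\oplus B$, whence $\ker{\phi_0}=(A\oplus B)\cap B=B$. This simultaneously shows $B\unlhd A\oplus B$, so the quotient $(A\oplus B)/B$ is a legitimate quotient gyrogroup via Theorem \ref{thm: G/kernel is a gyrogroup}. For the image: any $x=a\oplus b\in A\oplus B$ with $a\in A$ and $b\in B$ satisfies $\phi_0(x)=\phi(a)\oplus\phi(b)=\phi(a)\oplus 0=\phi(a)$, so $\phi_0(A\oplus B)=\phi(A)$. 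A second application of the First Isomorphism Theorem, now to $\phi_0$, yields $(A\oplus B)/B\cong\phi(A)$.

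Chaining the two isomorphisms gives $(A\oplus B)/B\cong\phi(A)\cong A/(A\cap B)$, as required. The main obstacle is not any deep gyrogroup manipulation but rather bookkeeping: one must be certain that each quotient invoked is formed from a subgyrogroup that is actually the kernel of some homomorphism of the ambient gyrogroup, since \textbf{normality} has been defined that way. Restricting the single map $\phi$ to $A$ and to $A\oplus B$ handles both quotients uniformly, and the identity $\phi_0(A\oplus B)=\phi(A)$ follows at once from $\phi(b)=0$ for $b\in B$. No explicit construction of a cross isomorphism or verification of the gyroassociative laws is needed, because those burdens have already been discharged in Theorems \ref{thm: G/kernel is a gyrogroup} and \ref{thm: the 1st isomorphism theorem}.
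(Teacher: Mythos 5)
Your proof is correct, but it takes a genuinely different route from the paper's. The paper constructs an explicit map $\vphi\colon A\oplus B\to A/(A\cap B)$ by $\vphi(a\oplus b)=a\oplus(A\cap B)$, which forces it to verify well-definedness (a computation with the right cancellation law I, showing $\ominus a_1\oplus a\in A\cap B$ whenever $a\oplus b=a_1\oplus b_1$) and to re-use the product computation from Lemma \ref{lem: A+B is a subgyrogroup} to check the homomorphism property; it then identifies $\ker{\vphi}=B$ and applies the First Isomorphism Theorem once. You instead never leave the codomain of the defining homomorphism $\phi$: you restrict $\phi$ to $A$ and to $A\oplus B$, observe that both restrictions have image $\phi(A)$ (the latter because $\phi(a\oplus b)=\phi(a)\oplus 0=\phi(a)$ for $b\in\ker\phi$) and kernels $A\cap B$ and $B$ respectively, and apply the First Isomorphism Theorem twice. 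This cleanly discharges both normality claims ($A\cap B\unlhd A$ and $B\unlhd A\oplus B$) directly from the paper's kernel-based definition of normality, and it eliminates the well-definedness and homomorphism checks entirely; the only gyrogroup-specific input you still need is Lemma \ref{lem: A+B is a subgyrogroup} to guarantee $A\oplus B\leqslant G$, which you correctly invoke. What the paper's approach buys in exchange is an explicit description of the isomorphism $(A\oplus B)/B\to A/(A\cap B)$ on cosets, whereas yours produces it only implicitly as a composite through $\phi(A)$ (and tacitly uses that the inverse of a gyrogroup isomorphism is again one, which is routine). Both arguments are complete; yours is the shorter and arguably more robust bookkeeping.
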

\begin{proof}
As in Lemma \ref{lem: A+B is a subgyrogroup}, $B = \ker{\phi}$. Note
that $A\cap B\unlhd A$ since $\ker{\res{\phi}{A}} = A\cap B$. Hence,
$A/(A\cap B)$ admits the quotient gyrogroup structure.
\par Define $\vphi\colon A\oplus B\to A/(A\cap B)$ by
$\vphi(a\oplus b) = a\oplus(A\cap B)$ for $a\in A$ and $b\in B$. To
see that $\vphi$ is well defined, suppose that $a\oplus b =
a_1\oplus b_1$, where $a, a_1\in A$ and $b, b_1\in B$. Note that
$b_1 = \ominus a_1 \oplus(a\oplus b) = (\ominus a_1\oplus
a)\oplus\gyr{\ominus a_1, a}{b}$. Set $b_2 = \ominus\gyr{\ominus
a_1, a}{b}$. Then $b_2\in B$ and $b_1 = (\ominus a_1\oplus a)\ominus
b_2$. The right cancellation law I gives $\ominus a_1 \oplus a =
b_1\boxplus b_2 = b_1\oplus \gyr{b_1, \ominus b_2}{b_2}$, which
implies $\ominus a_1\oplus a\in A\cap B$. By Proposition \ref{prop:
equivalence of sim_kernel}, $a_1\oplus (A\cap B) = a\oplus (A\cap
B)$.
\par As we computed in the lemma,
if $a, c\in A$ and $b, d\in B$, then $$(a\oplus b)\oplus(c\oplus d)
= (a\oplus c)\oplus \gyr{a, c}{\tilde{b}}$$ for some $\tilde{b}\in
B$. Hence, $ \vphi((a\oplus b)\oplus(c\oplus d)) =  (a\oplus
c)\oplus A\cap B = \vphi(a\oplus b)\oplus\vphi(c\oplus d)$. This
proves $\vphi\colon A\oplus B\to A/(A\cap B)$ is a surjective
gyrogroup homomorphism whose kernel is $\cset{a\oplus b}{a\in A,
b\in B, a\in A\cap B} = B$. Thus, $B\unlhd A\oplus B$ and by the
first isomorphism theorem, $(A\oplus B)/B \cong A/(A\cap B)$.
\end{proof}

\begin{theorem}[The Third Isomorphism Theorem]\label{thm: the 3rd isomorphism theorem}
Let $G$ be a gyrogroup and let $H, K$ be normal subgyrogroups of $G$
such that $H\subseteq K$. Then $K/H\unlhd G/H$ and $(G/H)/(K/H)\cong
G/K$ as gyrogroups.
\end{theorem}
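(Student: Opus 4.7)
The plan is to imitate the classical proof of the Third Isomorphism Theorem: I will exhibit $G/K$ as the homomorphic image of $G/H$ under the natural surjection $\vphi(a\oplus H)=a\oplus K$, read off its kernel as $K/H$, and apply the First Isomorphism Theorem (Theorem \ref{thm: the 1st isomorphism theorem}). Before the map can even be defined, though, I need to make sense of $K/H$ as a subset of $G/H$. Since $H\unlhd G$, write $H=\ker\phi$ for some gyrogroup homomorphism $\phi$ defined on $G$; the restriction $\res{\phi}{K}$ is then a gyrogroup homomorphism on $K$ with kernel $H\cap K=H$, so $H\unlhd K$. Therefore both $G/H$ and $K/H$ are bona fide gyrogroups by Theorem \ref{thm: G/kernel is a gyrogroup}, and clearly $K/H=\cset{k\oplus H}{k\in K}$ sits inside $G/H$.

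Next I would define $\vphi\colon G/H\to G/K$ by $\vphi(a\oplus H)=a\oplus K$. Well-definedness rests on the hypothesis $H\subseteq K$ together with Proposition \ref{prop: equivalence of sim_kernel}: if $a\oplus H=b\oplus H$, then $\ominus a\oplus b\in H\subseteq K$, so $a\oplus K=b\oplus K$. Surjectivity is immediate, and the homomorphism property is forced by the very definition of the quotient operation (\ref{eqn: quotient gyrogroup operation}):
\[
\vphi((a\oplus H)\oplus(b\oplus H))=\vphi((a\oplus b)\oplus H)=(a\oplus b)\oplus K=\vphi(a\oplus H)\oplus\vphi(b\oplus H).
\]

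Finally, a coset $a\oplus H$ lies in $\ker\vphi$ iff $a\oplus K=K$ iff $a\in K$, so $\ker\vphi=K/H$. Consequently $K/H\unlhd G/H$ by the very definition of normality in a gyrogroup, and the First Isomorphism Theorem applied to $\vphi$ yields $(G/H)/(K/H)\cong\vphi(G/H)=G/K$. The only real subtlety I would pause over is whether $\vphi$ genuinely respects the gyrogroup structure at the level of gyroautomorphisms; fortunately, Theorem \ref{thm: G/kernel is a gyrogroup} records the explicit formula $\gyr{X,Y}{(c\oplus H)}=\gyr{a,b}{c}\oplus H$ for quotient gyrations, so preservation of gyrations by $\vphi$ is a free consequence of the additive homomorphism property combined with item (\ref{item: preserving gyration}) of Proposition \ref{prop: property of homomorphism}. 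Everything else is routine coset bookkeeping.
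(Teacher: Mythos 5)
Your proposal is correct and follows essentially the same route as the paper: both define the natural surjection $\vphi\colon G/H\to G/K$ by $\vphi(a\oplus H)=a\oplus K$, use $H\subseteq K$ for well-definedness, identify $\ker{\vphi}=K/H$ to get normality, and invoke the First Isomorphism Theorem. Your additional remarks (verifying $H\unlhd K$ via the restriction $\res{\phi}{K}$, and the explicit check that gyrations are preserved) are details the paper leaves implicit, but they do not change the argument.
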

\begin{proof}
Let $\phi$ and $\psi$ be gyrogroup homomorphisms of $G$ such that
$\ker{\phi} = H$ and $\ker{\psi} = K$. Define $\vphi\colon G/H\to
G/K$ by $\vphi(a\oplus H) = a\oplus K$ for $a\in G$. Note that
$\vphi$ is well defined since $H\subseteq K$. Furthermore, $\vphi$
is a surjective \mbox{gyrogroup} homomorphism such that $\ker{\vphi}
= K/H$. Hence, $K/H\unlhd G/H$. By the first isomorphism theorem,
$(G/H)/(K/H) \cong G/K$.
\end{proof}

\begin{theorem}[The Lattice Isomorphism Theorem]\label{thm: the 4th isomorphism theorem}
Let $G$ be a gyrogroup and let $N\unlhd G$. There is a bijection
$\Phi$ from the set of subgyrogroups of $G$ containing $N$ onto the
set of subgyrogroups of $G/N$. The bijection $\Phi$ has the
following properties:
\begin{enumerate}
    \item\label{item: preserve inclusion} {$A\subseteq B$ if and only if
    $\Phi(A)\subseteq\Phi(B)$}
    \item\label{item: preserve L-subgyrogroup} {$A\leqslant_L G$ if and only if
    $\Phi(A)\leqslant_L G/N$}
    \item\label{item: preserve normal subgyrogroup} {$A\unlhd G$ if and only if
    $\Phi(A)\unlhd G/N$}
\end{enumerate}
for all subgyrogroups $A$ and $B$ of $G$ containing $N$.
\end{theorem}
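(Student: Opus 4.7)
The plan is to exhibit the bijection concretely via the canonical projection $\Pi\colon G\to G/N$ (which exists and is a surjective gyrogroup homomorphism since $N\unlhd G$, by the discussion preceding Theorem \ref{thm: the 1st isomorphism theorem}). Define $\Phi(A) = \Pi(A)$ for every subgyrogroup $A$ of $G$ containing $N$, and define a candidate inverse $\Psi(\overline{B}) = \Pi^{-1}(\overline{B})$ for every subgyrogroup $\overline{B}$ of $G/N$. Proposition \ref{prop: image of subgyrogroup is a subgyrogroup} guarantees $\Phi(A)\leqslant G/N$, and Proposition \ref{prop: inverse image is a subgyrogroup} guarantees $\Psi(\overline{B})\leqslant G$; moreover, $\Psi(\overline{B})\supseteq \Pi^{-1}(\{0\oplus N\})=N$ since $0\oplus N\in \overline{B}$.

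Next I would prove $\Psi\circ\Phi=\mathrm{Id}$ and $\Phi\circ\Psi=\mathrm{Id}$. The surjectivity of $\Pi$ makes the latter immediate. For the former, the key point is that the fibers of $\Pi$ are exactly the left cosets of $N$: by Proposition \ref{prop: equivalence of sim_kernel} (applied with $\vphi=\Pi$, using that $\ker\Pi=N$), $\Pi^{-1}(\Pi(a)) = a\oplus N$. Hence $\Pi^{-1}(\Pi(A)) = \bigcup_{a\in A}(a\oplus N)$, and since $N\subseteq A$ and $A$ is closed under $\oplus$, this union collapses back to $A$. Monotonicity of images and preimages yields property (1) at once.

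For property (2), the forward implication is exactly Proposition \ref{prop: image of subgyrogroup is a subgyrogroup} applied to the surjective homomorphism $\Pi$; the reverse follows from Proposition \ref{prop: inverse image is a subgyrogroup} once one rewrites $A=\Psi(\Phi(A))=\Pi^{-1}(\Phi(A))$. For property (3), the forward direction requires building a homomorphism on $G/N$ whose kernel detects $A$: if $A=\ker\vphi$ for some homomorphism $\vphi\colon G\to H$, then $N\subseteq\ker\vphi$ allows one to define $\widetilde{\vphi}\colon G/N\to H$ by $\widetilde{\vphi}(a\oplus N)=\vphi(a)$, with well-definedness supplied by Proposition \ref{prop: equivalence of sim_kernel}; a direct check shows $\widetilde{\vphi}$ is a gyrogroup homomorphism with $\ker\widetilde{\vphi}=\Phi(A)$. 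Conversely, if $\Phi(A)=\ker\psi$ for some homomorphism $\psi\colon G/N\to K$, then $A=\Pi^{-1}(\ker\psi)=\ker(\psi\circ\Pi)$, so $A\unlhd G$.

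The step I expect to require the most care is the identity $\Pi^{-1}(\Pi(A))=A$ underlying $\Psi\circ\Phi=\mathrm{Id}$; it is where the hypothesis $N\subseteq A$ is actually used, and it relies on correctly identifying the fibers of $\Pi$ with left cosets of $N$ via Proposition \ref{prop: equivalence of sim_kernel}. Once that identification is in place, properties (1)--(3) reduce to the two earlier propositions on images and preimages, together with the standard factor-through-quotient construction for (3), and no new gyrogroup calculation is needed.
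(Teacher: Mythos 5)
Your proposal is correct and follows essentially the same route as the paper: both use $\Phi = \Pi$ on subgyrogroups with $\Pi^{-1}$ as the inverse, derive items (1) and (2) from Propositions \ref{prop: image of subgyrogroup is a subgyrogroup} and \ref{prop: inverse image is a subgyrogroup}, and prove item (3) by factoring a homomorphism through the quotient in one direction and composing with $\Pi$ in the other. The only cosmetic difference is that you package bijectivity as $\Psi\circ\Phi=\mathrm{Id}$ via the identity $\Pi^{-1}(\Pi(A))=A$, whereas the paper runs the same coset element-chase as a direct injectivity argument.
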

\begin{proof}
Set $\cols{S} = \cset{K\subseteq G}{K\leqslant G\textrm{ and
}N\subseteq K}$. Let $\cols{T}$ denote the set of subgyro-groups of
$G/N$. Define a map $\Phi$ by $\Phi(K) = K/N$ for $K\in\cols{S}$. By
Proposition \ref{prop: image of subgyrogroup is a subgyrogroup},
$\Phi(K) = K/N = \Pi(K)$ is a subgyrogroup of $G/N$, where
$\Pi\colon G\to G/N$ is the canonical projection. Hence, $\Phi$ maps
$\cols{S}$ to $\cols{T}$.
\par Assume that $K_1/N = K_2/N$, with $K_1, K_2$ in
$\cols{S}$. For $a\in K_1$, $a\oplus N\in K_2/N$ implies
\mbox{$a\oplus N = b\oplus N$} for some $b\in K_2$. Hence, $\ominus
b\oplus a\in N$. Since $N\subseteq K_2$, $\ominus b\oplus a\in K_2$,
which implies $a = b\oplus (\ominus b\oplus a)\in K_2$. This proves
$K_1\subseteq K_2$. By interchanging the roles of $K_1$ and $K_2$,
one obtains similarly that $K_2\subseteq K_1$. Hence, $K_1 = K_2$
and $\Phi$ is injective.
\par Let $Y$ be an arbitrary subgyrogroup of $G/N$.
By Proposition \ref{prop: inverse image is a subgyrogroup},
$$\Pi^{-1}(Y) = \cset{a\in G}{a\oplus N\in Y}$$ is a subgyrogroup of $G$
containing $N$ for $a\in N$ implies $a\oplus N = 0\oplus N\in Y$.
Because $\Phi(\Pi^{-1}(Y)) = Y$, $\Phi$ is surjective. This proves
$\Phi$ defines a bijection from $\cols{S}$ onto $\cols{T}$.
\par The proof of Item \ref{item: preserve inclusion} is straightforward.
From Propositions \ref{prop: image of subgyrogroup is a
subgyrogroup} and \ref{prop: inverse image is a subgyrogroup}, we
have Item \ref{item: preserve L-subgyrogroup}. To prove Item
\ref{item: preserve normal subgyrogroup}, suppose that $A\unlhd G$.
Then $A = \ker{\psi}$, where $\psi\colon G\to H$ is a gyrogroup
homomorphism. Define $\vphi\colon G/N\to H$ by $\vphi(a\oplus N) =
\psi(a)$. Since $N\subseteq A$, $\vphi$ is well defined. Also,
$\vphi$ is a gyrogroup homomorphism. Since $\ker{\vphi} = A/N$, we
have $A/N\unlhd G/N$. Suppose conversely that $\Phi(A)\unlhd G/N$.
Then $A/N = \ker{\phi}$, where $\phi$ is a gyrogroup homomorphism of
$G/N$. Set $\vphi = \phi\circ\Pi$. Thus, $\vphi$ is a gyrogroup
homomorphism of $G$ with kernel $A$ and hence $A\unlhd G$.
\end{proof}

\section*{Acknowledgements}
We would like to thank Abraham A. Ungar for the suggestion of
gyrogroup $K_{16}$ supporting that L-subgyrogroups do exist and for
his helpful comments. This work was financially supported by
National Science Technology Development Agency (NSTDA), Thailand,
via Junior Science Talent Project (JSTP), under grant No.
JSTP-06-55-32E.

\bibliographystyle{amsplain}
\bibliography{Isomorphism_Theorem}

\providecommand{\bysame}{\leavevmode\hbox to3em{\hrulefill}\thinspace}
\providecommand{\MR}{\relax\ifhmode\unskip\space\fi MR }
\providecommand{\MRhref}[2]{%
  \href{http://www.ams.org/mathscinet-getitem?mr=#1}{#2}
}
\providecommand{\href}[2]{#2}
\begin{thebibliography}{10}

\bibitem{MF2009}
M.~Ferreira, \emph{Factorizations of {M}\"{o}bius gyrogroups}, Adv. Appl.
  {C}lifford Algebras \textbf{19} (2009), 303--323.

\bibitem{MF2011}
\bysame, \emph{Hypercomplex analysis and applications}, Trends in Mathematics,
  ch.~Gyrogroups in Projective Hyperbolic {C}lifford Analysis, pp.~61--80,
  Springer, Basel, 2011.

\bibitem{MF2014}
\bysame, \emph{Harmonic analysis on the {E}instein gyrogroup}, J. Geom.
  Symmetry Phys. \textbf{35} (2014), 21--60.

\bibitem{MFGR2011}
M.~Ferreira and G.~Ren, \emph{{M}\"{o}bius gyrogroups: A {C}lifford algebra
  approach}, J. Algebra \textbf{328} (2011), 230--253.

\bibitem{AKAU2004}
A.~Kasparian and A.~Ungar, \emph{{L}ie gyrovector spaces}, J. Geom. Symmetry
  Phys. \textbf{1} (2004), no.~1, 3--53.

\bibitem{SKJL2013}
S.~Kim and J.~Lawson, \emph{Unit balls, {L}orentz boosts, and hyperbolic
  geometry}, Results. Math. \textbf{63} (2013), 1225--1242.

\bibitem{JL2010}
J.~Lawson, \emph{{C}lifford algebras, {M}\"{o}bius transformations, {V}ahlen
  matrices, and {B}-loops}, Comment. Math. Univ. Carolin. \textbf{51} (2010),
  319--331.

\bibitem{LS1998AM}
L.~Sabinin, L.~Sabinina, and L.~Sbitneva, \emph{On the notion of gyrogroup},
  Aequationes Math. \textbf{56} (1998), 11--17.

\bibitem{NSAU2013}
N.~S\"{o}nmez and A.~Ungar, \emph{The {E}instein relativistic velocity model of
  hyperbolic geometry and its plane separation axiom}, Adv. Appl. {C}lifford
  Algebras \textbf{23} (2013), 209--236.

\bibitem{AU2002}
A.~Ungar, \emph{Beyond the {E}instein addition law and its gyroscopic {T}homas
  precession: The theory of gyrogroups and gyrovector spaces}, Fundamental
  Theories of Physics, vol. 117, Springer, Netherlands, 2002.

\bibitem{AU2005}
\bysame, \emph{Analytic hyperbolic geometry: Mathematical foundations and
  applications}, World Scientific, Hackensack, 2005.

\bibitem{AU2005JGSP}
\bysame, \emph{The proper-time {L}orentz group demystified}, J. Geom. Symmetry
  Phys. \textbf{4} (2005), 69--95.

\bibitem{AU2007SD}
\bysame, \emph{{E}instein's velocity addition law and its hyperbolic geometry},
  Comput. Math. Appl. \textbf{53} (2007), 1228--1250.

\bibitem{AU2008}
\bysame, \emph{Analytic hyperbolic geometry and {A}lbert {E}instein's {S}pecial
  {T}heory of {R}elativity}, World Scientific, Hackensack, 2008.

\bibitem{AU2008MAA}
\bysame, \emph{From {M}\"{o}bius to gyrogroups}, Amer. Math. Monthly
  \textbf{115} (2008), 138--144.

\bibitem{AU2009MC}
\bysame, \emph{A gyrovector space approach to hyperbolic geometry}, Synthesis
  Lectures on Mathematics and Statistics \#4, Morgan {\&} Claypool, {}, 2009.

\bibitem{AU2013}
\bysame, \emph{Hyperbolic geometry}, J. Geom. Symmetry Phys. \textbf{32}
  (2013), 61--86.

\end{thebibliography}
\end{document}